\documentclass{article}
\usepackage{graphicx} 
\usepackage{amsthm}
\usepackage[hidelinks]{hyperref}
\usepackage{amsmath}
\usepackage{amssymb}
\usepackage{setspace}
\usepackage{cite}
\usepackage{ragged2e}
\newtheorem{theorem}{Theorem}[section]
\newtheorem{observation}{Observation}[section]
\newtheorem{definition}{Definition}[section]
\newtheorem{corollary}{Corollary}[section]
\title{The Dual-Server Domination Number of Some Graph Operations}
\author{
Shrilaxmi Laxminarayana Rao$^{1}$,
Sayinath Udupa N. V.$^{1}$\footnote{Corresponding author}, Prathviraj N.$^{2}$\\[1ex]
$^{1}$Manipal Institute of Technology,\\
Manipal Academy of Higher Education, Manipal, India\\
\texttt{shrilaxmi.mitmpl2025@learner.manipal.edu}\\
\texttt{sayinath.udupa@manipal.edu}\\[1ex]
$^{2}$Manipal School of Information Sciences,\\
Manipal Academy of Higher Education, Manipal, India\\
\texttt{prathviraj.n@manipal.edu}
}
\date{}
\begin{document}
\justifying
\maketitle
\begin{abstract}
A subset $S \subseteq V(G)$ is called a dual-server dominating set if there exists a partition $\pi_S=\{R,B\}$ of $S$ such that every vertex in $V(G)\setminus S$ has at least one neighbour in $R$ and at least one neighbour in $B.$ The minimum cardinality of a DS-dominating set of $G$ is called the dual-server domination number, denoted by $\gamma_{ds}(G)$. In this paper, we investigate the dual-server domination number of several graph operations. We establish exact values for the dual-server domination number of the join, Cartesian product, and corona of graphs. We also determine the dual-server domination number of the splitting graphs of paths, cycles, and complete bipartite graphs. These results extend the study of dual-server domination and provide further insight into its behaviour under graph operations.
\end{abstract}
\textbf{Keywords:} Domination, Dual-server domination, Splitting graph, Graph operations.\\
\noindent\textbf{2020 Mathematics Subject Classification:}
05C69, 05C75, 05C76, 05C90.
\section{Introduction} Domination and its numerous variants have been widely studied in graph theory. For a comprehensive treatment of domination theory and its fundamental concepts, one may refer to\cite{Haynes1998,Haynes2023,HBW,Slater1998}. Throughout this paper, all graphs are finite, simple and undirected. Let $G=(V,E)$ be a graph with vertex set $V(G)$ and edge set $E(G).$ Two vertices $u$ and $v$ in $G$ are adjacent if $e = uv \in E(G).$ The open neighbourhood of a vertex $v$ in $G$ is denoted by $N_G(v)$ and is defined as
$N_G(v)=\{u \in V(G) \mid uv \in E(G)\}$. The minimum and maximum degrees of $G$ are denoted by $\delta(G)$ and $\Delta(G)$, respectively. A set $ S\subseteq V(G)$ is a dominating set if every vertex in $V(G) \setminus S$ has a neighbour in $S.$ The minimum cardinality of a dominating set of $G$ is called the domination number of $G$ and is denoted by $\gamma(G)$. The path graph, cycle graph, complete graph, edgeless graph, and complete bipartite graph on $n$ vertices are denoted by $P_n$, $C_n$, ${K_n}$, $\overline{K_n}$ and $K_{m,n}$, respectively. Several variants of domination have been introduced and extensively studied in the literature to model different graph-theoretic and network-related problems. A variety of domination parameters have been investigated, including connected domination, total domination, double domination, Roman domination and many other variants\cite{Haynes2023}. Recently Chellali et al.\cite{DS,TDS} introduced dual-server domination  as a model for networks that provide two distinct types of services. A set $S \subseteq V(G)$ is called a dual-server dominating set (or DS-dominating set) if there exists a partition $\pi_S=\{R,B\}$ of $S$ such that every vertex in $V(G)\setminus S$ has at least one neighbour in $R$ and at least one neighbour in $B.$ The minimum cardinality of a DS-dominating set of $G$ is called the dual-server domination number, denoted by $\gamma_{ds}(G).$ Throughout this paper, we write $S=R \cup B$ to denote a DS-dominating set together with its associated partition $\{R,B\}$. The study of graph parameters under graph operations has received considerable attention. In particular, domination-related parameters have been investigated for several graph operations, including the join, Cartesian product and corona\cite{West}. Among graph transformations, the splitting graph introduced by Sampathkumar and Walikar\cite{SPG} has been studied extensively with respect to several graph parameters, including domination-related parameters\cite{Deepalaxmi}. In this paper, we determine the dual-server domination number of the splitting graphs of paths, cycles, complete graphs, and complete bipartite graphs. We further study the dual-server domination number under the join, corona, and Cartesian product operations. The paper is organized as follows. Section 2 is devoted to the dual-server domination number of splitting graphs, while Section 3 considers the dual-server domination number under the join, corona, and Cartesian product operations.

\section{Dual-Server Domination in Splitting Graphs}

In this section, we study the dual-server domination number of splitting 
graphs. The splitting graph $Sp(G)$ is obtained by adding, for each vertex 
$v\in V(G)$, a new vertex $v'$, called the twin vertex of $v$, such that $N(v')=N(v).$

We determine the dual-server domination number of splitting graphs of paths, 
cycles, complete graphs, and complete bipartite graphs.

\begin{theorem}\label{SPN}
For the path graph $P_n$, $\gamma_{ds}(S_p(P_n)) = n+1.$
\end{theorem}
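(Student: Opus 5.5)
We prove the equality as an upper bound and a matching lower bound. Write $V(P_n)=\{v_1,\dots,v_n\}$ with $v_iv_{i+1}\in E$, and let $v_i'$ be the twin of $v_i$. Then $N(v_i')=\{v_{i-1},v_{i+1}\}\cap V(P_n)$ and $N(v_i)=\{v_{i\pm1},v_{i\pm1}'\}$. The twins form an independent set, and for $n\ge 2$ the vertices $v_1'$ and $v_n'$ are pendant. We will use two facts throughout.
\begin{enumerate}
\item Every vertex outside a DS-dominating set $S$ needs at least two neighbours in $S$, one in $R$ and one in $B$.
\item Hence a pendant vertex must lie in $S$, so $v_1',v_n'\in S$.
\end{enumerate}

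\textbf{Upper bound.} We will exhibit an explicit DS-dominating set of size $n+1$. The small cases guide the construction.
\begin{itemize}
\item For $n=2$, $Sp(P_2)$ is the path $v_1'v_2v_1v_2'$. Take $R=\{v_2\}$ and $B=\{v_1',v_2'\}$.
\item For $n=3$, take $S=\{v_1',v_3',v_2,v_2'\}$ with $R=\{v_2,v_1'\}$ and $B=\{v_2',v_3'\}$. Then $v_1$ and $v_3$ each see $v_2\in R$ and $v_2'\in B$, and $v_2$ has neighbours $v_1',v_3'$ in both classes.
\end{itemize}
In general, the plan is to cover the path by short blocks, each containing a pair $\{v_i,v_i'\}\subseteq S$ with $v_i\in R$ and $v_i'\in B$. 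Such a pair dual-dominates both $v_{i-1}$ and $v_{i+1}$, since each of them is adjacent to $v_i$ and to $v_i'$. The remaining twins $v_j'$ are handled by putting both $v_{j-1}$ and $v_{j+1}$ into $S$ with opposite colours. Finally, the forced vertices $v_1'$ and $v_n'$ are added. We will write the construction according to $n \bmod 4$ (or a similar small modulus), check every vertex outside $S$ directly, and count $|S|=n+1$.

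\textbf{Lower bound.} Let $S=R\cup B$ be any DS-dominating set. Put $a=|S\cap V(P_n)|$ and $b=|S\cap\{v_i'\}|$. Each twin $v_j'\notin S$ with $2\le j\le n-1$ forces both $v_{j-1}$ and $v_{j+1}$ into $S$. So if $T$ is the set of indices $j$ with $v_j'\notin S$, then $S$ contains $\{v_{j\pm1}:j\in T\}$. Similarly, each $v_j\notin S$ needs two $S$-vertices among $v_{j\pm1},v_{j\pm1}'$.

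We will organise this as a discharging or injection argument. Assign each original vertex $v_j$ a ``token'' that must be paid by some element of $S$ near position $j$. Then show that the forced vertices $v_1'$ and $v_n'$, together with the end conditions, produce one extra element of $S$ beyond $n$. Concretely, we aim to prove $|S\cap\{v_{j},v_j'\}|\ge 1$ for all $j$ after a suitable local reassignment, and then show that strict excess occurs at some end.

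\textbf{Main obstacle.} The main obstacle is this lower bound, especially ruling out configurations where long stretches with $v_j'\notin S$ and alternating choices save vertices. We would first try induction on $n$: we would examine the status of $v_1,v_2,v_2'$, delete an initial segment of the graph, and obtain a DS-dominating set of a smaller splitting path. We would also verify by hand the small values $n\le 5$ as base cases, since both the construction and possibly the formula itself must be sanity-checked there.
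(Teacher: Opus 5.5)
Your proposal has a genuine gap. The lower bound $\gamma_{ds}(Sp(P_n))\ge n+1$, which you correctly identify as the heart of the matter, is never proved. You announce a discharging/injection argument or an induction, but you do not say what is discharged or why the excess appears. The general upper-bound construction is also only described, though that part is routine. Take $S=\{v_i,v_i' : i\text{ odd}\}$, together with $v_n'$ when $n$ is even. Put $v_i\in R$, $v_i'\in B$ for $i\equiv 1\pmod 4$ and the reverse for $i\equiv 3\pmod 4$. The orientation of consecutive pairs must alternate: with your uniform rule ($v_i\in R$ for every pair), the twin between $v_{j-1}$ and $v_{j+1}$ would see only one class. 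The paper adds $v_n$ instead of $v_n'$ for even $n$. That fails because $v_n'$ is pendant, so your remark that $v_1',v_n'$ are forced is the right correction.

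For the lower bound, the missing idea is one local lemma. Let $n\ge 2$ (the case $n=1$ is trivial). Call position $j$ \emph{empty} if $v_j,v_j'\notin S$ and \emph{full} if $v_j,v_j'\in S$. With $e$ and $f$ the numbers of such positions, $|S|=n+f-e$.

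The lemma is that both neighbouring positions of an empty position are full. If $j$ is empty, then $j$ is interior because $v_1',v_n'\in S$. The condition at $v_j'$ then forces $v_{j\pm1}\in S$. Also $v_{j\pm1}'\in S$: a twin $v_{j\pm1}'\notin S$ would need two neighbours in $S$, but it has at most two neighbours and one of them is $v_j\notin S$.

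Consequently $j\mapsto j+1$ maps the empty positions injectively into the full ones. It misses the leftmost full position $p$: if $p=j+1$ with $j$ empty, then $j-1<p$ would be full. Moreover $f\ge 1$. Either $v_1\in S$, so position $1$ is full, or $v_1\notin S$ needs both of its neighbours $v_2,v_2'$ in $S$, so position $2$ is full. Hence $e\le f-1$ and $|S|\ge n+1$; no induction or case analysis modulo $4$ is needed. This is precisely the ``local reassignment with excess at an end'' you were aiming for.

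You cannot close the gap by borrowing the paper's lower-bound argument. It starts from the assumption that the even-indexed twins lie outside $S$, and so it only re-derives the structure of its own construction. It never treats an arbitrary DS-dominating set.
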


\begin{proof}
Let $P_n$ be the path graph on $n$ vertices with vertex set $V(P_n)= \{v_1, v_2, ..., v_n\}$. Let $V'= \{v_1', v_2', ..., v_n'\}$ denote the twin vertices corresponding to $v_1, v_2, ..., v_n$ in the splitting graph $S_p(P_n)$. The neighbourhoods in $S_p(P_n)$ are $N(v_1')= \{v_2\}$, $N(v_n')= \{v_{n-1}\}$, $N(v_i')= \{v_{i-1}, v_{i+1}\}$, $2 \le i \le n-1$, and $N(v_i)= \{v_{i-1}, v_{i+1}, v'_{i-1}, v'_{i+1}\}$, $2 \le i \le n-1$. Since both $v_1'$ and $v_n'$ have a unique neighbour, each of these vertices must belong to every dual-server dominating set. Let $S = R \cup B$. When $n$ is odd, define $S=\{v_i, v_i': i \text{ is odd}\}$. When $n$ is even, define $S= \{v_i, v_i': i \text{ is odd}\} \cup \{v_n\}$, where $R= \{v_i: i \equiv1\pmod{4}\} \cup \{v_i': i \equiv3\pmod{4}\}$, and $B= \{v_i': i \equiv1\pmod{4}\} \cup \{v_i: i \equiv3\pmod{4}\}$.  When $n$ is even, assign $v_n$ to either $R$ or $B$. Clearly, $|S| = n + 1$.\\ We now show that $S$ is a dual-server dominating set.  Let $v_i \notin S$. Since both $i-1$ and $i+1$ are odd, the construction of $R$ and $B$ ensures that at least one of the neighbours of $v_i$ belongs to $R$ and $B$. \\ If $v_i' \notin S$, then  its two neighbours are ${v_{i-1}, v_{i+1}}$. Since $i-1$ and $i+1$ are odd, by the construction of $R$ and $B$, at least one of the neighbours of $v_i$ belongs to $R$ and $B$. Thus, every vertex outside $S$ has at least one neighbour in both $R$ and $B$. Therefore, $S$ is a dual-server dominating set and hence $\gamma_{ds}(S_p(P_n)) \le n+1$. \\ Next, we show that $\gamma_{ds}(S_p(P_n)) \geq n+1$.\\ Let $S = R \cup B$ be a minimum dual-server dominating set of $S_p(P_n)$. Since $v_1'$ and $v_n'$ are pendant vertices, they must belong to S. Consider a vertex $v_i' \notin S$, where $i$ is even. Since $N(v_i')= \{v_{i-1}, v_{i+1}\}$, both of its neighbours must belong to S, with one neighbour in $R$ and the other in $B$. Consequently, all odd-indexed original vertices must belong to $S$. There are $\left\lceil \frac{n}{2} \right\rceil$ such vertices. Further, to ensure the domination of the remaining vertices, all odd-indexed twin vertices must also belong to  $S$. There are $\left\lceil \frac{n}{2} \right\rceil$ such vertices. Therefore, $ |S| \ge \left\lceil \frac{n}{2} \right\rceil + \left\lceil \frac{n}{2} \right\rceil = 2 \left\lceil \frac{n}{2} \right\rceil $.\\ If $n= 2k+1$, then \[|S| \ge 2 \left\lceil \frac{2k+1}{2}\right\rceil. \]
Since $k$ is an integer, 
$ 2 \left\lceil \frac{2k+1}{2} \right\rceil = 2(k+1) = n + 1$.
Hence $ \gamma_{ds}(S_p(p_n))= |S| \ge n + 1 $.\\ If $n= 2k$, then \[ |S| \ge 2 \left\lceil \frac{2k}{2} \right\rceil = 2k =n.\] Since $v_n'$ is a pendant vertex, we obtain $ \gamma_{ds}(S_p(p_n))= |S| \ge n + 1 $. Together with the upper bound, it follows that\\ \[\gamma_{ds}(S_p(P_n)) = n+1.\]
\end{proof}

\begin{corollary}
For the path graph $P_n$, $\gamma_{ds}(P_n)= \lceil \frac{\gamma_{ds}(S_p(P_n))}{2} \rceil.$
\end{corollary}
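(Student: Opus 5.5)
The plan is to compute $\gamma_{ds}(P_n)$ directly and then compare it with Theorem~\ref{SPN}. Since Theorem~\ref{SPN} gives $\gamma_{ds}(S_p(P_n))=n+1$, the corollary is equivalent to
\[
\gamma_{ds}(P_n)=\left\lceil \frac{n+1}{2}\right\rceil .
\]
So the whole task is to establish this identity for paths, by matching a structural lower bound with an explicit construction.

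\textbf{Lower bound.} Let $S=R\cup B$ be a DS-dominating set of $P_n$ with vertices $v_1,\dots,v_n$ in order.
\begin{itemize}
\item A vertex outside $S$ needs at least two neighbours in $S$: one in $R$ and one in $B$.
\item Every vertex of $P_n$ has degree at most $2$. Hence a vertex $v_i\notin S$ must have degree exactly $2$, with both $v_{i-1},v_{i+1}\in S$ and lying in different classes.
\item Consequently $v_1,v_n\in S$, and $V(P_n)\setminus S$ is an independent set contained in $\{v_2,\dots,v_{n-1}\}$.
\item An independent set in the path $v_2\cdots v_{n-1}$, which has $n-2$ vertices, has at most $\lceil (n-2)/2\rceil=\lfloor (n-1)/2\rfloor$ vertices.
\end{itemize}
Therefore
\[
|S|\ge n-\left\lfloor \frac{n-1}{2}\right\rfloor=\left\lceil \frac{n+1}{2}\right\rceil .
\]
The small cases $n=1,2$ fit this formula directly, since there every vertex lies in $S$.

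\textbf{Upper bound.} Define $S=\{v_i : i \text{ odd}\}$, and add $v_n$ when $n$ is even; in both cases $|S|=\lceil (n+1)/2\rceil$. Colour the odd-indexed vertices alternately along the path: put $v_i\in R$ when $i\equiv 1 \pmod 4$ and $v_i\in B$ when $i\equiv 3\pmod 4$. When $n$ is even, place $v_n$ in either class.

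Each vertex outside $S$ is some $v_i$ with $i$ even and $i<n$. Its neighbours $v_{i-1}$ and $v_{i+1}$ are consecutive odd-indexed vertices, so one of them is in $R$ and the other is in $B$. Thus $S$ is a DS-dominating set.

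\textbf{Conclusion.} Combining the two bounds gives $\gamma_{ds}(P_n)=\lceil (n+1)/2\rceil=\lceil \gamma_{ds}(S_p(P_n))/2\rceil$, using Theorem~\ref{SPN}.

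The only point needing care is the lower-bound step that vertices outside $S$ form an independent set of internal vertices. This relies on the degree bound $\Delta(P_n)\le 2$ and on checking the boundary and parity cases. I would verify it explicitly for $n$ even and $n$ odd, since the count $\lfloor (n-1)/2\rfloor$ is where an off-by-one error could occur.
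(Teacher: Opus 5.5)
Your proof is correct. Its skeleton matches the paper's: both use Theorem~\ref{SPN} to reduce the corollary to the identity $\gamma_{ds}(P_n)=\lceil (n+1)/2\rceil$. The difference is in how that identity is obtained. The paper does not prove it; it quotes it as Corollary~7 of \cite{DS} and compares the two ceilings. You prove it directly.

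Both halves of your direct proof are sound:
\begin{itemize}
\item Lower bound: a vertex outside $S$ needs two distinct neighbours in $S$, one in each of the disjoint classes $R$ and $B$. Since $\Delta(P_n)\le 2$, the complement of $S$ is an independent set of internal vertices, so it has at most $\lceil (n-2)/2\rceil$ elements.
\item Upper bound: the odd-indexed vertices, coloured alternately by residue mod $4$, together with $v_n$ when $n$ is even, form a valid DS-dominating set.
\end{itemize}
The boundary cases $n=1,2$ are handled. The identity $n-\lfloor (n-1)/2\rfloor=\lceil (n+1)/2\rceil$ holds for both parities, so the off-by-one check you flag goes through.

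The paper's version is shorter but rests on an external result. Yours makes the corollary self-contained apart from Theorem~\ref{SPN}, on which both arguments depend equally.
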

\begin{proof}
From theorem~\ref{SPN}, $\gamma_{ds}(S_p(P_n)) = n+1.$ Also, $\gamma_{ds}(P_n)= \lceil \frac{n+1}{2} \rceil$ (Corollary 7 of \cite{DS}). Therefore, $\lceil \frac{\gamma_{ds}(S_p(P_n))}{2} \rceil= \lceil \frac{n+1}{2} \rceil= \gamma_{ds}(P_n)$. 
\end{proof}
\begin{theorem}\label{SCN}
For the cycle graph $C_n$,
\[
\gamma_{ds}(S_p(C_n))=
\begin{cases}
n, & \text{if } n\equiv 0 \pmod{4},\\
n+1, & \text{otherwise}.
\end{cases}
\]

\end{theorem}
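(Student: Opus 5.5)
The plan is to prove the lower bound $\gamma_{ds}(S_p(C_n))\ge n$ for every $n$ by a counting argument, then show that equality forces $n\equiv 0\pmod 4$, and finally give explicit constructions of size $n$ (when $n\equiv0\pmod4$) and $n+1$ (otherwise).

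\textbf{Notation.} Let $V(C_n)=\{v_1,\dots,v_n\}$, with indices taken modulo $n$. In $S_p(C_n)$ we have
\[
N(v_i')=\{v_{i-1},v_{i+1}\},\qquad N(v_i)=\{v_{i-1},v_{i+1},v_{i-1}',v_{i+1}'\}.
\]

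\textbf{Lower bound.} Let $S=R\cup B$ be any DS-dominating set, and put $T=\{i: v_i'\notin S\}$. If $i\in T$, then $v_i'$ has only two neighbours, so both $v_{i-1}$ and $v_{i+1}$ lie in $S$, and they must receive different colours. Hence $S$ contains the $n-|T|$ twins $v_j'$ with $j\notin T$ and every original vertex indexed by $(T+1)\cup(T-1)$. The map $i\mapsto i+1$ is injective, so $|(T+1)\cup(T-1)|\ge |T+1|=|T|$. This gives
\[
|S|\ \ge\ (n-|T|)+|T| \;=\; n .
\]

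\textbf{Equality case.} Suppose $|S|=n$. Then three things hold:
\begin{itemize}
\item $S$ consists exactly of the twins indexed outside $T$ and the originals indexed by $T\pm1$;
\item $T+1=T-1$, so $T$ is invariant under the shift $i\mapsto i+2$;
\item every pair $v_{i-1},v_{i+1}$ with $i\in T$ is bichromatic.
\end{itemize}
The sets invariant under shift by $2$ are $\emptyset$, all of $\mathbb{Z}_n$, and, when $n$ is even, the evens or the odds. In each case a parity or colouring constraint appears.

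\begin{itemize}
\item If $T$ is everything, then $S=V(C_n)$. The constraints say that $v_{i-1},v_{i+1}$ get different colours for all $i$, i.e.\ the auxiliary graph $i\sim i+2$ is properly $2$-coloured. This graph is one $n$-cycle if $n$ is odd and two $(n/2)$-cycles if $n$ is even, so it is bipartite iff $n\equiv0\pmod4$.
\item If $T=\emptyset$, then $S=V'$. Each $v_i$ needs $v_{i-1}'$ and $v_{i+1}'$ coloured differently, which is the same condition.
\item If $T$ is the evens (or the odds), then $S$ is the odd originals together with the odd twins (or the even ones). The twins indexed by $T$ force the step-$2$ cycle of length $n/2$ on the opposite parity to be properly $2$-coloured, again requiring $n\equiv0\pmod4$.
\end{itemize}
Thus $|S|=n$ is impossible unless $n\equiv0\pmod4$, which gives $\gamma_{ds}\ge n+1$ in the other cases.

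\textbf{Constructions.}
\begin{itemize}
\item $n\equiv0\pmod4$: take $S=V(C_n)$ coloured $R,R,B,B,R,R,B,B,\dots$. Then $v_{i-1}$ and $v_{i+1}$ always differ, so every twin is dominated by both colours.
\item $n$ odd: take $S=V(C_n)\cup\{v_1'\}$. Since $v_1'\in S$, the constraint on the pair $(v_n,v_2)$ disappears, and the single odd step-$2$ cycle becomes a path, which can be properly $2$-coloured. The colour of $v_1'$ is arbitrary.
\item $n\equiv2\pmod4$: take $T$ to be the odd indices, i.e.\ $S=\{v_i,v_i': i \text{ even}\}\cup\{v_1'\}$. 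The odd twins $v_i'$ with $i\ne1$ impose constraints only on the even originals. These constraints form the odd $(n/2)$-cycle with the edge $v_nv_2$ removed, which is a path and hence colourable. For an odd original $v_i$, all four neighbours lie in $S$. When $i\neq1$, the neighbours $v_{i\pm1}$ already differ. For $v_1$, the even twins are otherwise unconstrained, so we colour $v_2'$ opposite to $v_n$.
\end{itemize}

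I expect the main obstacle to be this last case $n\equiv2\pmod4$. The naive choice $V(C_n)\cup\{v_1'\}$ fails there, because it still leaves the other odd step-$2$ cycle intact. That is why I use the parity-based set with one extra twin, and why I will check the domination of each vertex outside $S$ carefully in the write-up.
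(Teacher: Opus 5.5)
Your proof is correct, and it takes a different and more rigorous route than the paper's, mainly in the lower bound.

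\textbf{Lower bound.} The paper looks at even-indexed twins outside $S$ and concludes that all odd-indexed originals, and then all odd-indexed twins, are ``forced'' into $S$, giving $|S|\ge 2\lceil n/2\rceil$. For $n=4k+2$ it then rules out $|S|=n$ by a parity argument applied only to the single candidate $S=\{v_i,v_i': i\text{ odd}\}$. That forcing step assumes which twins lie outside $S$, and it fails for DS-dominating sets in general: your size-$n$ set $V(C_n)$ for $4\mid n$ contains no twins at all. Your set $T=\{i: v_i'\notin S\}$, together with the injectivity of $i\mapsto i+1$, gives $|S|\ge n$ for every DS-dominating set. Your equality analysis ($T+2=T$, so $T\in\{\emptyset,\mathbb{Z}_n,\text{evens},\text{odds}\}$) then reduces every size-$n$ candidate to a proper $2$-colouring of the step-$2$ graph $i\sim i+2$, or of one of its parity components. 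Such a colouring exists exactly when $4\mid n$. This covers the candidates the paper never considers, and it explains why $n\equiv 0\pmod 4$ is the threshold.

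\textbf{Constructions.} The paper uses all odd-indexed originals and twins, coloured $v_{4i+1},v'_{4i+3}\in R$ and $v_{4i+3},v'_{4i+1}\in B$, both for $4\mid n$ and for odd $n$. It adds $v_n'$ when $n\equiv 2\pmod 4$. You instead use $V(C_n)$ with the pattern $R,R,B,B,\dots$ for $4\mid n$ and $V(C_n)\cup\{v_1'\}$ for odd $n$. For $n\equiv 2\pmod 4$ your set coincides with the paper's up to the rotation $i\mapsto i-1$. The paper's colour lists stop at index $4k-1$, so they leave the last odd indices uncoloured when $4\nmid n$. Your path-colouring argument certifies the upper bounds without that bookkeeping.
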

\begin{proof}
Let $C_n$ be the cycle graph on $n$ vertices with vertex set $V(C_n)= \{v_1, v_2, ..., v_n\}$. Let $V^\prime= \{v_1^\prime, v_2^\prime, ..., v_n^\prime\}$ denote the twin vertices corresponding to $v_1, v_2, ..., v_n$ in the splitting graph $S_p(C_n)$. Now we shall prove the result using following cases.\\

\textbf{Case 1}. $n= 4k$.\\ Let $S= R \cup B$, where $R= \{v_{4i+1}, v_{4i+3}', 0 \le i \le k-1$\} and $B= \{v_{4i+3}, v_{4i+1}', 0 \le i \le k-1$\}. Clearly $|R|= \frac{n}{2}$, $|B|= \frac{n}{2}$ and hence $|S|= n$.\\ Now let $v_i' \notin S$. Since $N(v_i')= \{v_{i-1}, v_{i+1}\}$, it follows from the above construction that $v_i'$ has at least one neighbour in $R$ and $B$. Similarly, let $v_i \notin S$. Since $N(v_i)= \{v_{i-1}, v_{i+1}, v'_{i-1}, v'_{i+1}\}$, the construction ensures that $v_i$ has at least one neighbour in $R$ and $B$. Therefore, every vertex outside $S$ has a neighbour in both $R$ and $B$. Hence, S is DS-dominating set of $S_p(C_n)$. Consequently, $\gamma_{ds}(S_p(C_n)) \leq n$.\\

Next, we prove that $\gamma_{ds}(S_p(C_n)) \geq n$.\\ Let $S= R \cup B$  be a minimum DS-dominating set of $S_p(C_n)$. Consider an even-indexed twin vertex $v_i' \notin S$. Since $N(v_i')= \{v_{i-1}, v_{i+1}\}$, the definition of a dual-server dominating set implies that both $v_{i-1}$ and  $v_{i+1}$ must belong to $S$, with one vertex in $R$ and the other in $B$. As $i$ ranges over all even indices, every odd-indexed original vertex is therefore forced to belong to $S$. Hence, $S$ contains $ \frac{n}{2} $ odd-indexed original vertices. Further, to ensure domination of the remaining vertices, all odd-indexed twin vertices must also belong to $S$. Hence, $S$ contains $\frac{n}{2} $ odd-indexed twin vertices. Therefore,
\[\gamma_{ds}(S_p(C_n))=|S| \geq  \frac{n}{2} + \frac{n}{2} =n. \] 

\textbf{Case 2}. $n= 4k+1$ or $n= 4k+3$.\\ Consider the same construction of $S$ as in case 1, where $R= \{v_{4i+1}, v_{4i+3}', 0 \le i \le k-1$\} and $B= \{v_{4i+3}, v_{4i+1}', 0 \le i \le k-1$\}. Since $n$ is odd, there are $\left\lceil \frac{n}{2} \right\rceil$ odd-indexed original and twin vertices. Hence \\\[ |S|= 2\left\lceil \frac{n}{2} \right\rceil= n+1. \]\\The verification that $S$ is a DS-dominating set of $S_p(C_n)$ is identical to that of Case 1. Hence, $\gamma_{ds}(S_p(C_n)) \leq n+1$. \\ Next, we show that $\gamma_{ds}(S_p(C_n)) \geq n+1$. 
\\Let $S= R \cup B$ be a minimum dual-server dominating set of $S_p(C_n)$. By the same argument as in case 1, every odd-indexed original and twin vertex are forced to belong to $S$. Hence, $|S| \geq \left\lceil \frac{n}{2} \right\rceil+\left\lceil \frac{n}{2} \right\rceil= 2\left\lceil \frac{n}{2} \right\rceil$, Since n is odd, we obtain $|S| \geq n+1$. Therefore $\gamma_{ds}(S_p(C_n)) \geq n+1$. 

\textbf{Case 3}. n= 4k+2.\\
Let $S= \{v_i, v_i': i \text{~is odd}\} \cup \{v_n'\}$, where $R= \{v_{4i+1}, v_{4i+3}', 0 \le i \le k-1$\} and $B= \{v_{4i+3}, v_{4i+1}', 0 \le i \le k-1$\}. without loss of generality, the vertex $v_n'$ may be placed either in $R$ or $B$. Clearly, $|S|= n+1$. By the same argument as in case 1, every vertex outside $S$ has atleast one neighbour in $R$ and $B$. Therefore, $S$ is dual-server dominating set of $S_p(C_n)$. Hence, $\gamma_{ds}(S_p(C_n)) \leq n+1$.\\

Next, we show that $\gamma_{ds}(S_p(C_n)) \geq n+1$ .\\ Let $S= R \cup B $ be a minimum DS-dominating set of $S_p(C_n)$. By the same argument as in case 1, every odd- indexed original and twin vertices are forced to belong to S. Hence, $|S| \geq  \frac{n}{2} + \frac{n}{2} =n$.\\Suppose to the contrary, that $|S|= n$. Then $S$ consists precisely of all odd-indexed vertices. The dual-server domination condition forces the coloring of successive odd-indexed pairs throughout the cycle. Since $n= 4k+2$, there are $2k+1$ odd-indexed pairs. Therefore, $v_1$ and $v_{n-1}$ belong to the same color class. Now, consider the twin vertex $v_n' \notin S$. Since $N(v_n')= \{v_{n-1}, v_1\}$, both its neighbours belong to the same color class. Thus, $v_n'$ does not have neighbour in one color class. Hence, no DS-dominating set of cardinality $n$ exists. Therefore, $|S| \geq n+1$ and consequently, $\gamma_{ds}(S_p(C_n)) \geq n+1$.\\ Combining this with the upper bound, we obtain  \[ \gamma_{ds}(S_p(C_n))=  n+1.\]
\end{proof}

\begin{corollary}
For the cycle graph $C_n$, $\gamma_{ds}(C_n)= \lceil \frac{\gamma_{ds}(S_p(C_n))}{2} \rceil.$
\end{corollary}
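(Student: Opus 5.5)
The plan is to mirror the proof of the corollary for paths: take the value of $\gamma_{ds}(S_p(C_n))$ from Theorem~\ref{SCN}, take the known value of $\gamma_{ds}(C_n)$, and compare the two case by case. The one genuine step is stating $\gamma_{ds}(C_n)$ correctly. Its value is not simply $\lceil n/2\rceil$; it has a mod-$4$ behaviour matching Theorem~\ref{SCN}. I expect
\[
\gamma_{ds}(C_n)=\begin{cases} \lceil n/2\rceil, & n\not\equiv 2 \pmod 4,\\ n/2+1, & n\equiv 2\pmod 4.\end{cases}
\]
I would first look for this formula in Chellali et al.~\cite{DS}, where the value for $C_n$ should appear alongside the one for $P_n$ (Corollary 7). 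If it is not stated there in this form, I would prove it directly, as follows.

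\textbf{Upper bound on $\gamma_{ds}(C_n)$.}
\begin{itemize}
\item For $n$ odd, take $S=\{v_i: i \text{ odd}\}$. Colour the odd vertices alternately $R,B,R,B,\dots$ along $v_1,v_3,\dots,v_n$. Every even vertex $v_i$ has its two neighbours $v_{i\pm1}$ consecutive in this list, so they receive different colours. The only place the alternation can break is the pair $v_n,v_1$, which are adjacent and both in $S$, so no vertex outside $S$ needs them.
\item For $n\equiv 0 \pmod 4$, the same alternating colouring of the $n/2$ odd vertices closes up consistently around the cycle.
\item For $n\equiv 2\pmod 4$, add $v_n$ to $S$ so that the closing constraint disappears.
\end{itemize}

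\textbf{Lower bound on $\gamma_{ds}(C_n)$.} In $C_n$ every vertex has degree $2$. So each vertex outside $S$ has both neighbours in $S$, coloured differently. Hence $V\setminus S$ is independent, which already gives $|S|\ge \lceil n/2\rceil$.

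The main obstacle is the case $n\equiv 2\pmod 4$ with $|S|=n/2$. Then $S$ and $V\setminus S$ must alternate around the cycle, so without loss of generality $S$ is exactly the set of odd vertices. Each even vertex forces its two odd neighbours into different classes. These constraints form a cycle of length $n/2$ on the odd vertices, and $n/2$ is odd, so no proper $2$-colouring exists. This is exactly the parity argument used in Case 3 of Theorem~\ref{SCN}, and I would reuse it verbatim.

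\textbf{Matching the values.} With $\gamma_{ds}(C_n)$ established, compute $\lceil \gamma_{ds}(S_p(C_n))/2\rceil$ in each case of Theorem~\ref{SCN}:
\begin{itemize}
\item For $n\equiv 0\pmod 4$, we get $\lceil n/2\rceil=n/2$.
\item For $n$ odd, we get $\lceil (n+1)/2\rceil=(n+1)/2=\lceil n/2\rceil$.
\item For $n\equiv 2\pmod 4$, we get $\lceil (n+1)/2\rceil=n/2+1$.
\end{itemize}
Each of these agrees with $\gamma_{ds}(C_n)$, which gives the corollary.
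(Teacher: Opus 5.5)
Your proposal is correct and follows the paper's route: it combines Theorem~\ref{SCN} with the known value of $\gamma_{ds}(C_n)$ and compares the two case by case. The paper simply cites Proposition 8 of \cite{DS} for $\gamma_{ds}(C_n)$, so your self-contained derivation of it is valid but optional, and your odd case $\lceil (n+1)/2\rceil=(n+1)/2=\lceil n/2\rceil$ is correct where the paper's text misstates this value as $n/2$.
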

\begin{proof}
From theorem~\ref{SCN}, $\gamma_{ds}(S_p(C_n))=
\begin{cases}
n, & \text{if } n\equiv 0 \pmod{4},\\
n+1, & \text{otherwise}.
\end{cases}$. Also, $\gamma_{ds}(C_n)=
\begin{cases}
\frac{n+2}{2}, & \text{if } n\equiv 2 \pmod{4},\\
\lceil \frac{n}{2} \rceil, & \text{otherwise}.
\end{cases}$ (Proposition 8 of \cite{DS}).\\We consider two cases.

\textbf{Case 1.} Suppose $n\equiv 0 \pmod{4}$. Then $\gamma_{ds}(S_p(C_n))= n.$ Therefore, $\lceil \frac{\gamma_{ds}(S_p(C_n))}{2} \rceil= \lceil \frac{n}{2} \rceil= \frac{n}{2}= \gamma_{ds}(C_n)$.  \\

\textbf{Case 2.} Suppose $n\not\equiv 0 \pmod{4}$. Then $\gamma_{ds}(S_p(C_n))= n+1.$\\ If $n\equiv 2 \pmod{4}$, let $n= 4k+2$ for some integer $k>0$. Then $\lceil \frac{\gamma_{ds}(S_p(C_n))}{2} \rceil= \lceil \frac{n+1}{2} \rceil= \lceil \frac{4k+3}{2} \rceil= 2k+2= \frac{n+2}{2}= \gamma_{ds}(C_n)$.\\ If $n\equiv 1 \text{ or } 3 \pmod{4}$, then $\lceil \frac{\gamma_{ds}(S_p(C_n))}{2} \rceil= \lceil \frac{n+1}{2} \rceil= \frac{n}{2}= \gamma_{ds}(C_n)$.

Hence, $\gamma_{ds}(C_n)= \lceil \frac{\gamma_{ds}(S_p(C_n))}{2} \rceil.$

\end{proof}
\begin{theorem}
For $m, n\geq 2$, $\gamma_{ds}(S_p(K_{m,n}))= 4.$
\end{theorem}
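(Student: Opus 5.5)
Let $K_{m,n}$ have parts $X=\{x_1,\dots,x_m\}$ and $Y=\{y_1,\dots,y_n\}$, with twin vertices $X'=\{x_i'\}$ and $Y'=\{y_j'\}$. In $S_p(K_{m,n})$ we have $N(x_i')=Y$ and $N(y_j')=X$, while $N(x_i)=Y\cup Y'$ and $N(y_j)=X\cup X'$. We prove the two inequalities separately.

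\textbf{Upper bound.} Take $R=\{x_1,y_1\}$ and $B=\{x_2,y_2\}$, which uses $m,n\ge 2$, and let $S=R\cup B$. Every vertex outside $S$ lies in $X\cup X'$ or in $Y\cup Y'$. A vertex of $X\cup X'$ is adjacent to all of $Y$, so it sees $y_1\in R$ and $y_2\in B$. Symmetrically, a vertex of $Y\cup Y'$ sees $x_1\in R$ and $x_2\in B$. Hence $S$ is a DS-dominating set and $\gamma_{ds}\le 4$.

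\textbf{Lower bound.} Let $S=R\cup B$ be a DS-dominating set. We show $|S\cap Y|\ge 2$ or $|S\cap X'|=m\ge 2$.

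\begin{itemize}
\item If some twin $x_i'$ lies outside $S$, it needs neighbours in both $R$ and $B$. Its neighbourhood is $Y$, so $Y$ contains one vertex of $R$ and one of $B$, giving $|S\cap Y|\ge 2$.
\item Otherwise $X'\subseteq S$, giving $|S\cap X'|=m\ge 2$.
\end{itemize}

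In either case $S$ contains at least two vertices of $Y\cup X'$. By symmetry, $S$ contains at least two vertices of $X\cup Y'$. These two sets are disjoint, so $|S|\ge 4$.

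Combining the two bounds gives $\gamma_{ds}(S_p(K_{m,n}))=4$. The only point needing care is that the lower-bound case split covers every possibility, and it does: either some $x_i'$ lies outside $S$ or all of $X'$ lies in $S$. The hypothesis $m,n\ge 2$ enters in the second case of the lower bound and in the construction for the upper bound.
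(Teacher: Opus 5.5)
Your proof is correct. The upper bound is the same construction the paper uses, but your lower bound follows a different route.

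The paper argues by contradiction. It supposes a DS-dominating set with $|S|=3$ and splits into cases by how the three vertices are distributed between $X\cup X'$ and $Y\cup Y'$: all on one side, or one vertex on one side and two on the other. In each case it picks an outside twin vertex that sees too few vertices of $S$.

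You instead split $V$ into the two blocks $Y\cup X'$ and $X\cup Y'$, pairing each class of twins with its own neighbourhood. You then show directly that every DS-dominating set meets each block in at least two vertices, using the dichotomy ``some twin of that class lies outside $S$, or all of them lie in $S$''.

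This buys you two things.
\begin{itemize}
\item You get the bound for an arbitrary DS-dominating set, with no need to first reduce to $|S|=3$. The paper takes that step silently, relying on supersets of DS-dominating sets remaining DS-dominating.
\item Your second alternative is exactly the situation the paper's case analysis overlooks. When all twins on one side are in $S$, the outside twin the paper picks need not exist, e.g.\ $S=\{x_1',x_2',x_1\}$ with $m=2$ in the one-sided case, or $S=\{x_1,y_1',y_2'\}$ with $n=2$ in the mixed case. These sets fail for other reasons, so the paper's argument is easily patched. Yours never needs the patch and makes clear where $m,n\ge 2$ enters.
\end{itemize}

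The paper's approach, in return, is a routine finite check on a three-element set and does not depend on spotting the right partition of $V$.
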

\begin{proof}
Let $K_{m, n}$ be a complete bipartite graph with bipartition sets $X= \{v_1, v_2,..., v_m\}$ and $Y= \{u_1, u_2,..., u_n\}$. Let $X'= \{v_1', v_2',..., v_m'\}$ and $Y'= \{u_1', u_2',..., u_n'\}$ denote the sets of twin vertices corresponding to $X$ and $Y$, respectively, in the splitting graph $S_p(K_{m, n})$. We note that
\[
N(v_i)=Y\cup Y', \qquad
N(u_j)=X\cup X', \qquad
N(v_i')=Y, \qquad
N(u_j')=X.
\]
Let
$S=R\cup B=\{u_1,u_2,v_1,v_2\},
$
where
$
R=\{u_1,v_1\}$ and $ 
B=\{u_2,v_2\}.
$
Clearly, $|S|=4$.

Now we consider the following cases.

\textbf{Case 1.}
 $v_i\notin S$.\\ Since
$
N(v_i)=Y\cup Y'$, and  $u_1\in R$ and $u_2\in B$, it follows that $v_i$ has at least one
neighbour in both $R$ and $B$.

\textbf{Case 2.}
$v_i'\notin S$.\\ Since
$N(v_i')=Y$, and $u_1\in R$ and $u_2\in B$, each vertex $v_i'$ has at least one
neighbour in both $R$ and $B$.

\textbf{Case 3.}  $u_j\notin S$.\\ Since
$N(u_j)=X\cup X'$, and  $v_1\in R$ and $v_2\in B$, every such vertex $u_j$ has at least one
neighbour in both $R$ and $B$.

\textbf{Case 4.} $u_j'\notin S$.\\ Since $
N(u_j')=X$, and  $v_1\in R$ and $v_2\in B$, each vertex $u_j'$ has one neighbour in
both $R$ and $B$.

Hence, every vertex outside $S$ has at least one neighbour in $R$ and $B$. Therefore, $S$ is a dual-server dominating set  of
$S_p(K_{m,n})$.

Thus,
\[
\gamma_{ds}(S_p(K_{m,n}))\le 4.
\]
Next, we show that $
\gamma_{ds}(S_p(K_{m,n}))\ge 4$.\\
Assume, for contradiction, that $
\gamma_{ds}(S_p(K_{m,n}))\le 3$.
Let $
S=R\cup B$
be a minimum DS-dominating set  with $|S|\le 3$. Without loss of generality, let $|S|=3$. We consider the following cases.

\textbf{Case 5.}
Suppose all three vertices of $S$ belong to $X\cup X'$.\\
Let $v_i'\notin S$, then we observe that
$N(v_i')=Y$.
As $S\cap Y=\varnothing$, the vertex $v_i'$ has no neighbour in $S$.
Contradicting the assumption that $S$ is a DS-dominating set.

Hence, all three vertices of $S$ cannot belong to $X\cup X'$.

\textbf{Case 6.}
Suppose all three vertices of $S$ belong to $Y\cup Y'$.\\
By an argument similar to that of  Case~5, we note that all three vertices of
$S$ cannot belong to $Y\cup Y'$.

\textbf{Case 7.}
Suppose $S$ contains vertices from both $X\cup X'$ and $Y\cup Y'$.\\
Without loss of generality, assume $S$ contains exactly one vertex from
$X\cup X'$. Then choose a twin vertex $u_j'\notin S$.
Since $N(u_j')=X$,
we note that $u_j'$ has only one neighbour in $X\cup X'$, which
contradicts the definition of a dual-server domination.

Hence no dual-server dominating set of cardinality at most 3 exists.
Therefore, $\gamma_{ds}(S_p(K_{m,n}))\ge 4.$\\
Combining this with the upper bound, we obtain \[
\gamma_{ds}(S_p(K_{m,n}))=  4.
\]
\end{proof}
\begin{observation}
\leavevmode
\begin{enumerate}
    \item{For the complete bipartite graph $K_{1,1}, \gamma_{ds}(Sp(K_{1,1}))= 3.$}
    \item{For the complete bipartite graph $K_{1,n}, \gamma_{ds}(Sp(K_{1,n}))= n+2.$}\end{enumerate}
\end{observation}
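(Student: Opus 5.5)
The plan is to handle item~1 by the earlier path result and item~2 by a direct argument exploiting pendant vertices. Since $K_{1,1}=P_2$, item~1 follows immediately from Theorem~\ref{SPN}: $\gamma_{ds}(S_p(P_2))=2+1=3$. As a check, item~2 with $n=1$ also gives $n+2=3$, so the argument below should cover $n=1$ as well, with a small adjustment.

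For $K_{1,n}$, let $c$ be the centre and $u_1,\dots,u_n$ the leaves, with twins $c'$ and $u_1',\dots,u_n'$. In $S_p(K_{1,n})$ we have
\[
N(c)=\{u_j,u_j'\}_{j=1}^{n},\qquad N(c')=\{u_1,\dots,u_n\},\qquad N(u_j)=\{c,c'\},\qquad N(u_j')=\{c\}.
\]

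\textbf{Upper bound.} Take $S=\{u_1',\dots,u_n'\}\cup\{c,c'\}$ with $R=\{c\}\cup\{u_j'\}$ and $B=\{c'\}$. The only vertices outside $S$ are the $u_j$. Each $u_j$ is adjacent to $c\in R$ and to $c'\in B$, so $S$ is a DS-dominating set of size $n+2$.

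\textbf{Lower bound.} Let $S=R\cup B$ be any DS-dominating set.
\begin{enumerate}
\item Each $u_j'$ is pendant, so it cannot have neighbours in both $R$ and $B$. Hence all $n$ of the $u_j'$ lie in $S$.
\item Suppose some $u_j\notin S$. Then both of its neighbours $c$ and $c'$ must lie in $S$, in different classes. This gives $|S|\ge n+2$.
\item Otherwise all $u_j\in S$, so $|S|\ge 2n$. This is at least $n+2$ when $n\ge 2$.
\item When $n=1$ in the previous case, $c'$ has the single neighbour $u_1$, so $c'$ is itself pendant and must be in $S$. This gives $|S|\ge 3=n+2$.
\end{enumerate}

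Combining the two bounds gives $\gamma_{ds}(S_p(K_{1,n}))=n+2$. The only delicate point is the case split on whether some leaf $u_j$ is left outside $S$, together with the boundary case $n=1$. No step presents a real obstacle, since the pendant vertices force most of $S$.
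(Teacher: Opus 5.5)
Your proof is correct. The paper states this observation without any proof, so there is no argument of the paper's to compare against; your proof supplies the missing justification. Your upper-bound set $\{c,c'\}\cup\{u_j'\}$ is checked correctly, and the lower bound is sound: the pendant twins $u_j'$ are forced into $S$, then you split on whether some leaf $u_j$ lies outside $S$, with the $n=1$ boundary handled by noting that $c'$ becomes pendant. This is in the same spirit as the paper's other splitting-graph arguments, which force pendant vertices into $S$ and then do a short case analysis. The structural fact you isolate, $N(u_j')=\{c\}$, is exactly what makes $K_{1,n}$ differ from the paper's case $m,n\ge 2$. There no vertex of $S_p(K_{m,n})$ is pendant, and the answer is the constant $4$.

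One remark on item 1: you do not need Theorem~\ref{SPN}, since your direct argument at $n=1$ already gives $\gamma_{ds}(S_p(K_{1,1}))=3$. It is better not to rely on that theorem's proof here. For even $n$ the paper's construction is $\{v_i,v_i' : i \text{ odd}\}\cup\{v_n\}$, which omits the pendant twin $v_n'$. For $n=2$ the set $\{v_1,v_1',v_2\}$ fails at $v_2'$, whose only neighbour is $v_1$. So the cited proof does not actually certify the $n=2$ case. The value $3$ is nonetheless correct, since $S_p(P_2)\cong P_4$. Your self-contained treatment of $n=1$ is therefore the right justification to keep.
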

\begin{theorem}\label{SKN}
For the complete graph $K_n$, $
\gamma_{ds}(Sp(K_n)) = 4,  \text{ for all } n \ge 3.$
\end{theorem}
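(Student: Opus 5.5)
We prove $\gamma_{ds}(Sp(K_n))\le 4$ by an explicit construction and $\gamma_{ds}(Sp(K_n))\ge 4$ by counting how many original vertices $S$ must contain. Let $V(K_n)=\{v_1,\dots,v_n\}$ and let $v_1',\dots,v_n'$ be the twins. In $Sp(K_n)$ we have
\[
N(v_i)=\{v_j,v_j' : j\ne i\}, \qquad N(v_i')=\{v_j : j\ne i\}.
\]
In particular the twin vertices form an independent set.

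\textbf{Upper bound.} Take $S=R\cup B$ with $R=\{v_1,v_2'\}$ and $B=\{v_2,v_1'\}$. For $j\ge 3$, which exists because $n\ge 3$, both $v_j$ and $v_j'$ are adjacent to $v_1\in R$ and to $v_2\in B$. The remaining vertices $v_1,v_2,v_1',v_2'$ all lie in $S$. Hence $S$ is a DS-dominating set and $\gamma_{ds}(Sp(K_n))\le 4$.

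\textbf{Lower bound.} Suppose $S=R\cup B$ is a DS-dominating set with $|S|\le 3$. Write $A=S\cap V(K_n)$ and $T=S\cap\{v_1',\dots,v_n'\}$. Every twin outside $S$ needs two neighbours in $S$, one in $R$ and one in $B$. Its only neighbours are original vertices, so both must lie in $A$.

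\begin{itemize}
\item \emph{Case $|A|\le 1$.} Some twin lies outside $S$. If $|T|<n$ this is immediate. If $|T|=n$, then $n=3$ and $A=\varnothing$, and the originals have no neighbour in $S$ at all, since twins are never adjacent to their own original and $v_i$ is adjacent only to the other twins, all in $S$ but all of them would have to supply both colours to every $v_i$ with only three twins available. In every subcase some vertex outside $S$ has fewer than two neighbours in $S$, a contradiction.
\item \emph{Case $|A|=2$, say $A=\{v_a,v_b\}$.} The twin $v_a'$ has only one neighbour in $A$, namely $v_b$. So $v_a'$ must belong to $S$, and likewise $v_b'$. This gives $|S|\ge 4$, a contradiction.
\item \emph{Case $|A|=3$, say $A=\{v_a,v_b,v_c\}$.} Then $S=A$, so $v_a',v_b',v_c'\notin S$. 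The condition at $v_a'$ forces $v_b$ and $v_c$ into different classes. Similarly, $v_b'$ separates $v_a$ and $v_c$, and $v_c'$ separates $v_a$ and $v_b$. This would properly $2$-colour a triangle, which is impossible.
\end{itemize}

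Hence $\gamma_{ds}(Sp(K_n))\ge 4$, and together with the upper bound, $\gamma_{ds}(Sp(K_n))=4$ for all $n\ge 3$.

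The main obstacle is the case $|A|=3$. There the counting argument alone does not suffice, and the triangle-parity argument is needed.
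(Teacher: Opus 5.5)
Your upper bound is correct, and the lower-bound cases $|A|\le 1$ with $|T|<n$, $|A|=2$ and $|A|=3$ are all sound. The gap is the subcase $|T|=n$, which forces $n=3$ and $S=\{v_1',v_2',v_3'\}$. There you claim the originals have no neighbour in $S$, and you conclude that some vertex outside $S$ has fewer than two neighbours in $S$. Both statements are false. Each $v_i$ is adjacent to the two twins $v_j',v_k'$ with $\{i,j,k\}=\{1,2,3\}$, so every vertex outside $S$ has exactly two neighbours in $S$. No counting argument can exclude this set. (Your case heading also asserts that some twin lies outside $S$, which fails precisely here.)

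What excludes this set is the colouring. Since $v_i\notin S$, the twins $v_j'$ and $v_k'$ must lie in different classes, and this holds for each of the three pairs, giving a proper $2$-colouring of a triangle. So this subcase needs the same parity argument you reserved for $|A|=3$. Alternatively, one class must be a singleton $\{v_i'\}$, and $v_i\notin S$ has no neighbour in it. With that repair your proof is complete.

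Your lower bound also takes a different route from the paper. The paper's upper-bound set $R=\{v_1,v_1'\}$, $B=\{v_2,v_2'\}$ differs from yours only cosmetically. For the lower bound, the paper observes that with $|S|=3$ one class is a singleton $\{w\}$. It then argues that the non-adjacent partner of $w$ ($v_i'$ if $w=v_i$, or $v_i$ if $w=v_i'$) has no neighbour in that class. This is shorter, but it tacitly assumes the partner lies outside $S$. For $R=\{v_1\}$, $B=\{v_1',v_2\}$ the stated reason does not apply; the actual contradiction comes from $v_2'$. Your split on $|S\cap V(K_n)|$ absorbs such configurations in the case $|A|=2$. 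So once the $n=3$ subcase is fixed, yours is the more complete argument.
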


\begin{proof}
Let $K_n$ be the complete graph with vertex set $V(K_n)=\{v_1,v_2,\ldots,v_n\}$,
and let $V'=\{v_1',v_2',\ldots,v_n'\}$
be the set of twin vertices corresponding to $v_1,v_2,\ldots,v_n$ in the splitting graph $Sp(K_n)$.\\
We first show that $
\gamma_{ds}(Sp(K_n))\le 4.$
Consider the set $
S=R \cup B=\{v_1,v_2,v_1',v_2'\},$
where $R=\{v_1,v_1'\} \text{ and } B=\{v_2,v_2'\}.$\\
Suppose $v_i\notin S$. Since $K_n$ is complete, $v_i$ is adjacent to every original vertex. Also, $v_i$ is adjacent to every twin vertex except $v_i'$. In particular, $v_i$ is adjacent to $v_1,v_2,v_1'$ and $v_2'$. Hence, $v_i$ has at least one neighbour in both $R$ and $B$.
Now suppose $v_i'\notin S$. Since $v_i'$ has the same neighbourhood as $v_i$, it is adjacent to every original vertex except $v_i$. In particular, $v_i'$ is adjacent to $v_1$ and $v_2$. Hence, $v_i'$ has at least one neighbour in both $R$ and $B$.
Therefore, every vertex outside $S$ has at least one neighbour in both $R$ and $B$. Thus, $S$ is a DS-dominating set of $Sp(K_n)$. Consequently, \[
\gamma_{ds}(Sp(K_n))\le 4.\]

Next, we show that $
\gamma_{ds}(Sp(K_n))\ge 4.$\\
Suppose, to the contrary, that $\gamma_{ds}(Sp(K_n))\le 3.$
Let $S=R\cup B$
be a minimum DS-dominating set  of $Sp(K_n)$ with $|S|=3$.
Since $|S|=3$, either $R$ or $B$ contains exactly one vertex. Without loss of generality, assume that $R$ contains exactly one vertex.
Suppose $R=\{v_i\}$
for some $i$. Since $v_i'$ is not adjacent to $v_i$, the vertex $v_i'$ has no neighbour in $R$, contradicting the fact that $S$ is a DS-dominating set.
Similarly, suppose $R=\{v_i'\}$
for some $i$. Since $v_i$ is not adjacent to $v_i'$, the vertex $v_i$ has no neighbour in $R$, again contradicting the fact that $S$ is a DS-dominating set.

Hence, no DS-dominating set of $Sp(K_n)$ has cardinality 3. Therefore,
\[
\gamma_{ds}(Sp(K_n))\ge 4.
\]
\end{proof}
\begin{corollary}
For the complete graph $K_n$, $\gamma_{ds}(Sp(K_n))= 2\gamma_{ds}(K_n).$
\end{corollary}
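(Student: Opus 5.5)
The plan is to reduce the corollary to two facts: the value of $\gamma_{ds}(Sp(K_n))$ from Theorem~\ref{SKN}, and the value $\gamma_{ds}(K_n)=2$ for $n\ge 3$. We then compare the two.

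\textbf{Step 1: $\gamma_{ds}(K_n)=2$ for $n\ge 3$.} For the upper bound, take $R=\{v_1\}$ and $B=\{v_2\}$. Every other vertex of $K_n$ is adjacent to both $v_1$ and $v_2$, so $S=\{v_1,v_2\}$ is a DS-dominating set.

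For the lower bound, suppose some DS-dominating set has $|S|\le 1$. Then one of $R$ and $B$ is empty. Since $n\ge 3$, some vertex lies outside $S$, and it cannot have a neighbour in the empty class. This is a contradiction, so $|S|\ge 2$. The same bound is also recorded in \cite{DS}, but the direct argument is short enough to include.

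\textbf{Step 2: Combine with Theorem~\ref{SKN}.} That theorem gives $\gamma_{ds}(Sp(K_n))=4$ for all $n\ge 3$, and $4=2\cdot 2=2\gamma_{ds}(K_n)$.

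\textbf{Where care is needed.} The only delicate point is the range of $n$, since the corollary inherits the hypothesis $n\ge 3$ from Theorem~\ref{SKN}. For small $n$ the identity should be checked separately or excluded.

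For $n=2$, $Sp(K_2)=P_4$. A check along the lines of Theorem~\ref{SPN} gives $\gamma_{ds}(P_4)=3$, while $\gamma_{ds}(K_2)=2$, so the identity fails. For $n=1$, $Sp(K_1)=\overline{K_2}$, and since every vertex outside $S$ needs two neighbours, all vertices must lie in $S$; thus $\gamma_{ds}(\overline{K_2})=2$ while $\gamma_{ds}(K_1)=1$, and the identity happens to hold.

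I will therefore state the corollary explicitly for $n\ge 3$, matching Theorem~\ref{SKN}.
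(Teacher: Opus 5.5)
Your proposal is correct and follows the same route as the paper: combine $\gamma_{ds}(Sp(K_n))=4$ from Theorem~\ref{SKN} with $\gamma_{ds}(K_n)=2$, the only differences being that you prove $\gamma_{ds}(K_n)=2$ directly rather than citing it, and that you make explicit the hypothesis $n\ge 3$ the paper's proof silently inherits from Theorem~\ref{SKN}. That caveat is needed, since $Sp(K_2)\cong P_4$ gives $\gamma_{ds}=3\neq 4=2\gamma_{ds}(K_2)$, so the corollary as literally stated fails at $n=2$.
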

\begin{proof}
It is known that $\gamma_{ds}(K_n)= 2$ (Observertion 1 of \cite{DS}). Also, by theorem~\ref{SKN}, $\gamma_{ds}(Sp(K_n))= 4$. Therefore, $\gamma_{ds}(Sp(K_n))= 4= 2\gamma_{ds}(K_n).$
\end{proof}
\section{Dual-Server Domination under Graph Operations}

In this section, we study the behaviour of the dual-server domination number under several graph operations. In particular, we consider the join, corona, and Cartesian product operations and establish corresponding results for these 
graph classes.
\subsection{Join Graphs}
\begin{definition}
[\textbf{Join of Graphs }\cite{West}]
The join of two graphs $G_1(V_1,E_1)$ and $G_2(V_2,E_2)$, denoted by $G_1+G_2$, is the graph obtained from the disjoint union $G_1\cup G_2$ by adding all edges joining every vertex of $V_1$ to every vertex of $V_2$.
\end{definition}
\begin{theorem}\label{DJP2}
Let $G$ and $H$ be graphs of orders $m \geq 2$ and $n \geq 2$, respectively. Then $\gamma_{ds}(G+H)= 2$ if and only if at least one of the following conditions holds:
\begin{enumerate}
\item {$\gamma_{ds}(G)= 2$} or {$\gamma_{ds}(H)= 2$} 
\item{$\Delta(G)= m-1 \text{ and } \Delta(H)= n-1$}
\end{enumerate}
\end{theorem}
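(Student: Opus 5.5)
The plan is to prove both directions directly from the definition, after first recording a general lower bound. Any graph $F$ with at least two vertices has $\gamma_{ds}(F)\ge 2$. Indeed, if $|S|\le 1$, then one of $R,B$ is empty, so any vertex outside $S$ (and there is one, since $|V(F)|\ge 2$) has no neighbour in that class. Since $m,n\ge 2$, this applies to $G$, $H$ and $G+H$. Consequently, in both the hypothesis and the conclusion, "$\gamma_{ds}=2$" is equivalent to "there is a DS-dominating set of size exactly $2$". Such a set is $S=\{x,y\}$ with $R=\{x\}$, $B=\{y\}$, and the condition is that every vertex outside $S$ is adjacent to both $x$ and $y$.

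For sufficiency, I will give explicit sets in $G+H$. Suppose $\gamma_{ds}(G)=2$, witnessed by $\{x,y\}\subseteq V(G)$. Then in $G+H$ the same set with $R=\{x\}$, $B=\{y\}$ works. Vertices of $G$ outside $S$ are adjacent to $x$ and $y$ already in $G$. Every vertex of $H$ is joined to both $x$ and $y$ by the join edges. The case $\gamma_{ds}(H)=2$ is symmetric. Now suppose $\Delta(G)=m-1$ and $\Delta(H)=n-1$. Pick a universal vertex $x$ of $G$ and a universal vertex $y$ of $H$, and put $R=\{x\}$, $B=\{y\}$. A vertex of $G$ other than $x$ is adjacent to $x$ by universality and to $y$ by the join. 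Vertices of $H$ are handled symmetrically. Hence $\gamma_{ds}(G+H)\le 2$, and with the lower bound we get equality.

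For necessity, take a DS-dominating set $S=\{x,y\}$ of $G+H$ with $R=\{x\}$, $B=\{y\}$, and split according to where $x$ and $y$ lie.
\begin{itemize}
\item If $x,y\in V(G)$, then every vertex of $V(G)\setminus S$ has neighbours $x$ and $y$ in $G+H$. Since edges of $G+H$ between two vertices of $G$ are exactly edges of $G$, these adjacencies hold in $G$. So $\{x,y\}$ with the same partition is DS-dominating in $G$, giving $\gamma_{ds}(G)\le 2$ and hence $\gamma_{ds}(G)=2$ by the lower bound.
\item If $x,y\in V(H)$, the same argument gives $\gamma_{ds}(H)=2$.
\item If, after renaming, $x\in V(G)$ and $y\in V(H)$, then every vertex of $G$ other than $x$ must be adjacent to $x$ in $G$, because its only neighbours in $R=\{x\}$ come from $G$. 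So $\deg_G(x)=m-1$, and likewise $\deg_H(y)=n-1$. This gives condition 2.
\end{itemize}

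I do not expect a genuine obstacle. The only point needing care is the observation that every DS-dominating set has both $R$ and $B$ nonempty as soon as some vertex lies outside $S$. This is what makes $\gamma_{ds}\ge 2$ and forces the partition of a 2-set to be $\{x\},\{y\}$. The role of $m,n\ge 2$ is to guarantee exactly this, so that "$\gamma_{ds}(G)=2$" is not vacuous and matches the 2-set produced in the first case.
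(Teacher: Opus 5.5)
Your proof is correct and follows the same route as the paper. For necessity you use the same three-way case split on where the two vertices of a size-$2$ DS-dominating set of $G+H$ lie, and for sufficiency you give the same two explicit constructions. The differences are cosmetic. You argue necessity directly rather than by contradiction, and you state explicitly the lower bound $\gamma_{ds}\ge 2$ and the forced partition $R=\{x\}$, $B=\{y\}$, which the paper uses without proof.
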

\begin{proof}
Assume that $\gamma_{ds}(G+H)=2$.
Let $S= R \cup B= \{x,y\}$
be a minimum dual-server dominating set of $G+H$, where $
R=\{x\}$ and $B=\{y\}$.

Suppose, to the contrary,  that neither of the stated conditions holds. Then: 
\begin{enumerate}
\item{$\gamma_{ds}(G)\neq 2$} and {$\gamma_{ds}(H)\neq 2$}
\item{$\Delta(G)\le m-2$ or 
$\Delta(H)\le n-2$
}
\end{enumerate}
We now consider the following cases.

\textbf{Case 1.} Suppose $x,y\in V(G)$.\\
Since $G+H$ is the join of $G$ and $H$, every vertex of $H$ is adjacent to
both $x$ and $y$. Moreover, every vertex of $G\setminus\{x,y\}$ is adjacent
to both $x$ and $y$. Hence, $S$ is also a dual-server dominating set of $G$.
Therefore,
$\gamma_{ds}(G)=2$,
contradicting the assumption that $
\gamma_{ds}(G)\neq 2$.

\textbf{Case 2.} Suppose $x,y\in V(H)$.
By an argument similar to that in Case~1, we obtain $
\gamma_{ds}(H)=2$,
contradicting the assumption that $\gamma_{ds}(H)\neq 2$.

\textbf{Case 3.} Suppose, without loss of generality, that $x\in V(G)\quad\text{and}\quad y\in V(H)$.
Since $G+H$ is the join of $G$ and $H$, every vertex of
$H\setminus\{y\}$ is adjacent to $x$, and every vertex of
$G\setminus\{x\}$ is adjacent to $y$.
Since $S$ is a  DS-dominating set of $G+H$, every vertex of
$G\setminus\{x\}$ must also be adjacent to $x$, and every vertex of
$H\setminus\{y\}$ must also be adjacent to $y$.\\
Hence, $
\deg_G(x)=m-1 \quad\text{ and }\quad  \deg_H(y)=n-1 $.\\
Therefore, $
\Delta(G)=m-1
\quad\text{ and }\quad
\Delta(H)=n-1,$
which contradicts the assumption that $
\Delta(G)\le m-2
\quad\text{ or }\quad
\Delta(H)\le n-2.$

Hence, at least one of the conditions \textbf{(1)} or \textbf{(2)} holds.\\ 
Conversly, assume that atleast one of the given conditions holds. We consider the following cases.

\textbf{Case 4.} $\gamma_{ds}(G)= 2$.\\ Let $S= R \cup B= \{x, y\}$, without loss of generality where $R= \{x\}$ and $B= \{y\}$ be a minimum  DS-dominating set of $G$. Every vertex in $V(G)\setminus S$ is adjacent to both $x$ and $y$. Moreover, by the definition of the join, every vertex of $H$ is adjacent to both $x$ and $y$ . Therefore S is a minimum  DS-dominating set of $G+H$ also. Thus $\gamma_{ds}(G+H)= 2$. 

\textbf{Case 5.} Similarly if $\gamma_{ds}(H)= 2$, then $\gamma_{ds}(G+H)= 2$. 

\textbf{Case 6.} $\Delta(G)=m-1$  and  $\Delta(H)=n-1$.\\ Let $S=R \cup B$, where, without loss of generality $R= \{x\}$ and $B= \{y\}$. Let $x \in V(G)$ and $y \in V(H)$ be vertices such that $\deg_G(x)=m-1$  and   $\deg_H(y)=n-1$ respectively.\\ Since $\deg_G(x)=m-1$ , the vertex $x$ is adjacent to every vertex in $V(G) \setminus \{x\}$. Also, by the definition of the join, $x$ is adjacent to every vertex of $H$. Similarly, $\deg_H(y)=n-1$ implies that $y$ is adjacent to every vertex in  $V(H) \setminus \{y\}$, and by the definition of the join, $y$ is adjacent to every vertex of $G$.\\ Hence, every vertex in $V(G+H)\setminus S$ is adjacent to $x$ and $y$. Therefore, S is a DS-dominating set of $G+H$, and so $\gamma_{ds}(G+H) \leq 2$. Since every dual-server dominating set has cardinality at least 2, we conclude that $\gamma_{ds}(G+H)= 2$.
\end{proof}

\begin{theorem}\label{DJP3}
Let $G$ and $H$ be graphs of orders $m \geq 2$ and $n \geq 2$, respectively. Suppose that $\gamma_{ds}(G+H) \neq  2$. Then $\gamma_{ds}(G+H)= 3$ if and only if at least one of the following conditions holds:
\begin{enumerate}
\item{$\gamma_{ds}(G)= 3$} or {$\gamma_{ds}(H)= 3$} 
\item{$\Delta(G) \geq m-2$} or {$\Delta(H) \geq n-2$}
\end{enumerate}
\end{theorem}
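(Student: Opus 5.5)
The plan is to prove both directions directly from the definition, using Theorem~\ref{DJP2} only to rule out the value $2$. Throughout, I use that every vertex of $G$ is adjacent to every vertex of $H$ in $G+H$, and that $\gamma_{ds}\ge 2$ for any graph with at least two vertices.

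\emph{Sufficiency.} Since $\gamma_{ds}(G+H)\neq 2$ by hypothesis, it suffices to exhibit a DS-dominating set of size $3$ under each condition.
\begin{itemize}
\item If $\gamma_{ds}(G)=3$, take a minimum DS-dominating set $S=R\cup B$ of $G$. Both $R$ and $B$ are nonempty, since a vertex outside $S$ needs a neighbour in each. Every vertex of $H$ is adjacent to all of $S$, so $S$ remains DS-dominating in $G+H$. The case $\gamma_{ds}(H)=3$ is symmetric.
\item If $\Delta(G)\ge m-2$, choose $x\in V(G)$ with $\deg_G(x)\ge m-2$. Let $z\in V(G)$ be the non-neighbour of $x$ in $G$ if one exists, and otherwise any other vertex of $G$ (this is possible since $m\ge2$). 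Pick any $y\in V(H)$, and put $R=\{x\}$, $B=\{y,z\}$.
\begin{itemize}
\item A vertex of $G\setminus\{x,z\}$ is adjacent to $x$ (it is not the non-neighbour $z$) and to $y$ (by the join).
\item A vertex of $H\setminus\{y\}$ is adjacent to $x\in R$ and to $z\in B$ (by the join).
\end{itemize}
So $\gamma_{ds}(G+H)\le 3$. The case $\Delta(H)\ge n-2$ is symmetric.
\end{itemize}

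\emph{Necessity.} Let $S=R\cup B$ be a DS-dominating set of $G+H$ with $|S|=3$. Without loss of generality $R=\{x\}$ and $|B|=2$.
\begin{itemize}
\item If $S\subseteq V(G)$, every vertex of $G\setminus S$ has neighbours in $R$ and $B$ within $G$ itself, so $S$ is DS-dominating in $G$ and $\gamma_{ds}(G)\le3$. We cannot have $\gamma_{ds}(G)=2$: by Theorem~\ref{DJP2} that would force $\gamma_{ds}(G+H)=2$, contrary to the hypothesis. Hence $\gamma_{ds}(G)=3$. The case $S\subseteq V(H)$ gives $\gamma_{ds}(H)=3$ in the same way.
\item Otherwise $S$ meets both $V(G)$ and $V(H)$. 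By symmetry between $G$ and $H$, assume $x\in V(G)$.
\begin{itemize}
\item Since $S$ meets $V(H)$, we have $|S\cap V(G)|\le 2$.
\item Every vertex of $V(G)\setminus S$ must have a neighbour in $R=\{x\}$, so it is adjacent to $x$ in $G$.
\item The only vertices of $G$ other than $x$ that may fail to be adjacent to $x$ therefore lie in $S\cap V(G)\setminus\{x\}$, which has at most one element.
\item Hence $\deg_G(x)\ge m-2$, and so $\Delta(G)\ge m-2$.
\end{itemize}
If instead $x\in V(H)$, the same argument gives $\Delta(H)\ge n-2$.
\end{itemize}

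I expect no serious obstacle. The one delicate point is the necessity case $S\subseteq V(G)$: there we must invoke the hypothesis $\gamma_{ds}(G+H)\ne2$ together with Theorem~\ref{DJP2} to upgrade $\gamma_{ds}(G)\le3$ to equality.
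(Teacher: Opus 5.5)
Your proof is correct. The sufficiency half is essentially the paper's argument: Cases 4--6, with your single choice of $z$ merging Subcases 6.1 and 6.3. The necessity half, however, takes a different and sounder route.

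The paper argues by contradiction with the two-element class fixed as $R=\{x,y\}$. In the mixed case $x,y\in V(G)$, $z\in V(H)$, it claims that $\deg_G(x),\deg_G(y)\le m-3$ yields a vertex of $G$ nonadjacent to both $x$ and $y$. That does not follow, since the two non-neighbourhoods can be disjoint. Its two subcases also do not address the configurations in which the two-element class is split between $V(G)$ and $V(H)$.

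Your key observation concerns the vertex of the singleton class. It must be adjacent, in its own factor, to every vertex of that factor outside $S$, and at most one other vertex of that factor lies in $S$. This handles all mixed configurations at once and attaches the degree bound to the correct graph. For instance, in the paper's Subcase 3.1 it is $z$ that is forced to have $\deg_H(z)=n-1$.

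Likewise, in the case $S\subseteq V(G)$ the paper simply asserts that $S$ is a \emph{minimum} DS-dominating set of $G$, whereas you actually exclude $\gamma_{ds}(G)=2$. This can also be seen without Theorem~\ref{DJP2}: a DS-dominating set of $G$ of size $2$ is already one of $G+H$, contradicting $\gamma_{ds}(G+H)=3$.

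One cosmetic point: if $m=3$ and $\gamma_{ds}(G)=3$, then $S=V(G)$ leaves no vertex of $G$ outside it. So the nonemptiness of $R$ and $B$ comes from choosing the partition (or from the convention that partition classes are nonempty), not from a vertex outside $S$.
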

\begin{proof}
Assume that $
\gamma_{ds}(G+H)=3.$
Let $S=R\cup B=\{x,y,z\}$
be a minimum dual-server dominating set of $G+H$, where, without loss of generality $R=\{x,y\}$ and $ B=\{z\}$.

Suppose, to the contrary, that 
\begin{enumerate}
\item{$\gamma_{ds}(G)\neq 3$} and { $\gamma_{ds}(H)\neq 3$}
\item{$\Delta(G)\le m-3$} and {$\Delta(H)\le n-3$}
\end{enumerate}

We now consider the following cases.

\textbf{Case 1.} Suppose $x,y,z\in V(G)$. \\Since $G+H$ is the join of $G$ and $H$, every vertex of $H$ is adjacent to
each of $x$, $y$ and $z$. Moreover, every vertex in
$V(G)\setminus S$ is adjacent to at least one vertex of $R$ and  $B$. Hence, $S$ is also a minimum DS-dominating set of $G$. Therefore, $\gamma_{ds}(G)=3$, contradicting the assumption that
$\gamma_{ds}(G)\neq 3.$ 

\textbf{Case 2.} Suppose $x,y,z\in V(H)$.\\By an argument similar to that in Case~1, we obtain $\gamma_{ds}(H)=3$,
contradicting the assumption that $\gamma_{ds}(H)\neq 3.$ 

\textbf{Case 3.} Suppose two vertices of $S$ belong to $G$ (or $H$), and
the remaining vertex belongs to $H$ (or $G$).

\textbf{Subcase 3.1.} Suppose, without loss of generality, $x,y\in V(G)$ and $z\in V(H).$\\
Since we have assumed $x$ and $y$ have degree atmost $m-3$ then 
there exists a vertex $v\in V(G)\setminus\{x,y\}$
that is nonadjacent to both $x$ and $y$.
Although $v$ is adjacent to $z$ by the join operation, it has no neighbour
in
$R=\{x\}$,
contradicting the assumption that $S$ is a minimum DS-dominating set of $G+H$.\\
Hence, $\Delta(G)\ge m-2$, 
contradicting the assumption that $
\Delta(G)\le m-3$ .

\textbf{Subcase 3.2.} Suppose, without loss of generality, $z\in V(G)$ and $x,y\in V(H).$\\  By an argument similar to that in Subcase~3.1, we obtain $
\Delta(H)\ge n-2$,
contradicting the assumption that $
\Delta(H)\le n-3$.

Hence, at least one of the conditions \textbf{(1)} or \textbf{(2)} holds. \\
Conversely, assume that at least one of the given conditions holds. We consider the following cases.

\textbf{Case 4.} Suppose that $\gamma_{ds}(G)=3$\\. Let $S=\{x,y,z\}$
be a minimum dual-server dominating set of $G$, where, without loss of generality, $R=\{x\} \quad \text{and} \quad B=\{y,z\}$. Then every vertex in $V(G)\setminus S$ is adjacent to at least one vertex in $R$ and $B$. Since $G+H$ is the join of $G$ and $H$, every vertex of $H$ is adjacent to every vertex of $S$. Therefore, $S$ is also a minimum dual-server dominating set of $G+H$. Hence,
\[
\gamma_{ds}(G+H)= 3.
\]

\textbf{Case 5.} Suppose that $\gamma_{ds}(H)=3$.\\ By an argument analogous to that in Case~4, we conclude that
\[
\gamma_{ds}(G+H)=3.
\]

\textbf{Case 6.} Suppose that $\Delta(G)\ge m-2$  or  $\Delta(H)\ge n-2$.
We consider the following subcases.\\
Let $S=\{x,y,z\}$,
where, without loss of generality, $R=\{x\}$ and  $B=\{y,z\}$.

\textbf{Subcase 6.1.} Let $x\in V(G)$ be a vertex with $\deg_G(x)=m-1$.
Choose any other vertex $y\in V(G)$, and let $z\in V(H)$.\\
Since $G+H$ is the join of $G$ and $H$, every vertex in $H\setminus\{z\}$ is adjacent to both $x$ and $y$. Moreover, every vertex in $V(G)\setminus\{y\}$ is adjacent to $x$ in $G$ and to $z$ in $H$ by the join operation. Hence, every vertex outside $S$ has a neighbour in both $R$ and $B$. Therefore, $S$ is a DS-dominating set of $G+H$. Thus,
\[
\gamma_{ds}(G+H)\le 3.
\]
Since $\gamma_{ds}(G+H)\neq 2$, we obtain
\[
\gamma_{ds}(G+H)=3.
\]

\textbf{Subcase 6.2.} Let $x\in V(H)$ be a vertex with $\deg_H(x)=n-1$.
Choose any other vertex $y\in V(H)$, and let $z\in V(G)$.
Applying the same argument as in Subcase~6.1, we conclude that
\[
\gamma_{ds}(G+H)=3.
\]

\textbf{Subcase 6.3.} Let $x\in V(G)$ be a vertex with $\deg_G(x)=m-2.$
Let $y$ be the unique vertex of $G$ that is not adjacent to $x$, and choose $z\in V(H)$.\\
Every vertex in $H\setminus\{z\}$ is adjacent to both $x$ and $y$ by the join operation. Furthermore, every vertex in $V(G)\setminus\{y\}$ is adjacent to $x$ in $G$ and to $z$ in $H$. Hence, every vertex outside $S$ is adjacent to at least one vertex in $R$ and  $B$. Therefore, $S$ is a DS-dominating set of $G+H$, and
\[
\gamma_{ds}(G+H)\le 3.
\]
Since $\gamma_{ds}(G+H)\neq 2$, we obtain
\[
\gamma_{ds}(G+H)=3.
\]

\textbf{Subcase 6.4.} Let $x\in V(H)$ be a vertex with $\deg_H(x)=n-2.$
Let $y$ be the unique vertex of $H$ that is not adjacent to $x$, and choose $z\in V(G)$.\\
By the same argument as in Subcase~6.3, we conclude that
\[
\gamma_{ds}(G+H)=3.
\]
\end{proof}

\begin{theorem}
Let $G$ and $H$ be graphs of orders $m\ge2$ and $n\ge2$, respectively. Then $\gamma_{ds}(G+H)=4$
if and  only if  following conditions holds:
\begin{enumerate}
    \item $\gamma_{ds}(G)\ge4$ and $\gamma_{ds}(H)\ge4$
    \item $\Delta(G)\le m-3$ and $\Delta(H)\le n-3$.
\end{enumerate}
\end{theorem}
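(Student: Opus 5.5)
The plan is to show first that $\gamma_{ds}(G+H)\le 4$ always holds. Then the value $4$ is attained exactly when the values $2$ and $3$ are excluded, and Theorem~\ref{DJP2} and Theorem~\ref{DJP3} already characterize those two values.

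\textbf{Step 1 (universal upper bound).} Since $m,n\ge 2$, pick distinct $x_1,x_2\in V(G)$ and distinct $y_1,y_2\in V(H)$. Put $R=\{x_1,y_1\}$ and $B=\{x_2,y_2\}$.
\begin{itemize}
\item Every vertex of $V(G)\setminus\{x_1,x_2\}$ is adjacent, by the join, to $y_1\in R$ and $y_2\in B$.
\item Every vertex of $V(H)\setminus\{y_1,y_2\}$ is adjacent to $x_1\in R$ and $x_2\in B$.
\end{itemize}
Hence $S=R\cup B$ is a DS-dominating set, and $\gamma_{ds}(G+H)\le 4$ for all such $G,H$. Since any DS-dominating set needs both $R$ and $B$ nonempty, we also have $\gamma_{ds}(G+H)\ge 2$. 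So $\gamma_{ds}(G+H)\in\{2,3,4\}$, and $\gamma_{ds}(G+H)=4$ if and only if $\gamma_{ds}(G+H)\ne 2$ and $\gamma_{ds}(G+H)\ne 3$.

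\textbf{Step 2 (translate via earlier theorems).}
\begin{itemize}
\item By Theorem~\ref{DJP2}, $\gamma_{ds}(G+H)\neq 2$ if and only if $\gamma_{ds}(G)\ne 2$, $\gamma_{ds}(H)\ne 2$, and it is not the case that both $\Delta(G)=m-1$ and $\Delta(H)=n-1$.
\item Given $\gamma_{ds}(G+H)\ne 2$, Theorem~\ref{DJP3} says $\gamma_{ds}(G+H)\ne 3$ if and only if $\gamma_{ds}(G)\ne3$, $\gamma_{ds}(H)\ne 3$, $\Delta(G)\le m-3$ and $\Delta(H)\le n-3$.
\end{itemize}
Combining the two, and using $\gamma_{ds}\ge 2$ for graphs of order at least $2$, the conditions $\gamma_{ds}(G)\notin\{2,3\}$ and $\gamma_{ds}(H)\notin\{2,3\}$ become $\gamma_{ds}(G)\ge 4$ and $\gamma_{ds}(H)\ge 4$. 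The degree requirement of Theorem~\ref{DJP2} is then implied by $\Delta(G)\le m-3$, so it drops out. This yields exactly conditions (1) and (2).

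\textbf{Step 3 (the two directions).}
\begin{itemize}
\item Forward: if $\gamma_{ds}(G+H)=4$, then it is neither $2$ nor $3$, and Step 2 gives (1) and (2).
\item Converse: assume (1) and (2). Then neither condition of Theorem~\ref{DJP2} holds, so $\gamma_{ds}(G+H)\ne 2$. Neither condition of Theorem~\ref{DJP3} holds, so $\gamma_{ds}(G+H)\ne 3$. Step 1 then forces the value $4$.
\end{itemize}

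\textbf{Main obstacle.} The delicate point is relying on Theorem~\ref{DJP3} in the converse direction: its condition (2) is phrased with ``or'' and presupposes $\gamma_{ds}(G+H)\ne2$. To be safe, I would verify $\gamma_{ds}(G+H)\ne 3$ directly under (1) and (2).

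Suppose $S=R\cup B$ has $|S|=3$.
\begin{itemize}
\item If $S\subseteq V(G)$: every vertex of $H$ is adjacent to all of $S$, so $S$ restricts to a DS-dominating set of $G$ of size $3$. This contradicts $\gamma_{ds}(G)\ge4$. The case $S\subseteq V(H)$ is symmetric.
\item Otherwise $S$ splits $2+1$ across the two sides, say two vertices in $G$ and one in $H$. Some colour class, say $R$, lies entirely in $G$ and has size $1$. Its single vertex has $G$-degree at most $m-3$, so some vertex of $V(G)\setminus S$ misses it and thus has no neighbour in $R$. (If instead the size-$1$ class lies in $H$, the same argument uses $\Delta(H)\le n-3$.)
\end{itemize}
A short subcase check is needed to handle which colour class is the singleton and which side it lies on.
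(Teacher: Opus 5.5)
Your proof is correct and follows essentially the same route as the paper: the $2+2$ construction (one vertex of each colour taken from each factor) gives $\gamma_{ds}(G+H)\le 4$, and the values $2$ and $3$ are then excluded via Theorems~\ref{DJP2} and~\ref{DJP3}, together with the fact that a DS-dominating set of $G+H$ lying inside one factor restricts to a DS-dominating set of that factor. Your direct check that no $3$-element set works, split according to whether the singleton colour class lies in $G$ or in $H$, is a worthwhile safeguard, because at that point the paper simply cites Theorem~\ref{DJP3}, whose own necessity argument does not treat every colouring of a set split $2+1$ across the factors.
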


\begin{proof}
Assume that $\gamma_{ds}(G+H)=4$.

We first show that condition (1) holds. Suppose, to the contrary, that $
\gamma_{ds}(G)\le3 \quad \text{or} \quad \gamma_{ds}(H)\le3.$

Suppose $\gamma_{ds}(G)\le3$. Let $S=R\cup B$ be a DS-dominating set  of $G$ with
$|S|\le3$. Then every vertex in $V(G)\setminus S$ is adjacent to at least
one vertex in $R$ and  $B$. Since $G+H$ is the join of
$G$ and $H$, every vertex of $H$ is adjacent to  $S$.
Therefore, $S$ is also a DS-dominating set of $G+H$, which implies that
\[
\gamma_{ds}(G+H)\le3,
\]
contradicting the assumption that $\gamma_{ds}(G+H)=4$. Hence,
\[
\gamma_{ds}(G)\ge4.
\]
Similarly, if $\gamma_{ds}(H)\le3$, then we can show by the similar argument,
\[
\gamma_{ds}(H)\ge4.
\]
Therefore, condition (1) holds.

Next, we show that condition (2) holds. Suppose, to the contrary, that
\[
\Delta(G)\ge m-2 \quad \text{or} \quad \Delta(H)\ge n-2.
\]
Then, by Theorem~\ref{DJP3},
\[
\gamma_{ds}(G+H)=3,
\]
which contradicts the assumption that $\gamma_{ds}(G+H)=4$. Therefore,
\[
\Delta(G)\le m-3
\quad \text{and} \quad
\Delta(H)\le n-3.
\]

Conversely, assume that the given conditions hold. Let $S=R\cup B=\{u,x,y,z\}$, 
where, without loss of generality, $R=\{u,x\}
\quad \text{and} \quad
B=\{y,z\}$.

Assume that $u,y\in V(G)
\quad \text{and} \quad x,z\in V(H)$.
Since $G+H$ is the join of $G$ and $H$, every vertex in
$V(G)\setminus\{u,y\}$ is adjacent to both $x$ and $z$, while every vertex in
$V(H)\setminus\{x,z\}$ is adjacent to both $u$ and $y$. Hence every vertex outside $S$ has a neighbour in both $R$ and $B$. Therefore, $S$ is a DS-dominating set of
$G+H$. Thus,
\[
\gamma_{ds}(G+H)\le4.
\]

We now show that $
\gamma_{ds}(G+H)\ge4$.

Suppose, to the contrary, that $
\gamma_{ds}(G+H)\le3$.
Let $S$ be a minimum DS-dominating set  of $G+H$ with $|S|\le3$. We consider the following
cases.

\textbf{Case 1.} Suppose $S\subseteq V(G)$.\\
Since $G+H$ is the join of $G$ and $H$, every vertex of $H$ is adjacent to
every vertex of $S$. Moreover, every vertex in $V(G)\setminus S$ is adjacent
to at least one vertex in $R$ and  $B$. Hence $S$ is
also a DS-dominating set of $G$. Therefore,
$
\gamma_{ds}(G)\le3$,
contradicting the assumption that
\[
\gamma_{ds}(G)\ge4.
\]

\textbf{Case 2.} Suppose $S\subseteq V(H)$.\\
By an argument similar to that in Case~1,  we show that $S$ is also a DS-dominating set of $H$.
Therefore, $
\gamma_{ds}(H)\le3$,
contradicting the assumption that
\[
\gamma_{ds}(H)\ge4.
\]

\textbf{Case 3.} Suppose that $S$ contains vertices from both $G$ and $H$.
We consider the following subcases.

\textbf{Subcase 3.1.} Suppose $\gamma_{ds}(G+H)=2$.

Then, by theorem~\ref{DJP2},
\[
\Delta(G)=m-1
\quad \text{ and } \quad
\Delta(H)=n-1,
\]
which contradicts the assumption that
\[
\Delta(G)\le m-3
\quad \text{and} \quad
\Delta(H)\le n-3.
\]

\textbf{Subcase 3.2.} Suppose $\gamma_{ds}(G+H)=3$.

Then, by  theorem~\ref{DJP3},
\[
\Delta(G)\ge m-2
\quad \text{or} \quad
\Delta(H)\ge n-2,
\]
which contradicts the assumption that
\[
\Delta(G)\le m-3
\quad \text{and} \quad
\Delta(H)\le n-3.
\]

Therefore,
\[
\gamma_{ds}(G+H)\ge4.
\]
\end{proof}

\begin{observation}
\leavevmode
\begin{enumerate}
\item{Let $G$ and $H$ be two graphs, each of order 1. Then $\gamma_{ds}(G+H)= 2$.}
\item{Let $G$ and $H$ be two graphs and let w be the order of H. Then $\gamma_{ds}(G+H)= w$ if and only if $G \cong K_1$, and $H \cong \overline{K_n},  K_2,  K_2 \cup \overline{K_{n-2}}$.
\item{If $G \cong K_{1}$ and $H \cong nK_{2}$, then $\gamma_{ds}(G+H)=n+1$.}}
\end{enumerate}
\end{observation}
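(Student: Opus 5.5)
The plan is to verify the three items separately by direct counting in the join, since $G+H$ is very explicit in each case. Throughout I use the fact that $\gamma_{ds}\ge 2$, because $R$ and $B$ must both be nonempty. I also use the following elementary lemma, applied repeatedly: \emph{a vertex with at most one neighbour in $S$ must itself lie in $S$}. In particular, a vertex of degree one lies in every DS-dominating set.

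\textbf{Item 1.} For item~1, $K_1+K_1\cong K_2$. Both vertices have degree one, so both lie in $S$, and $S=V$ with $R,B$ singletons works. Hence the value is $2$.

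\textbf{Item 3.} I do item~3 next, because the counting there is the model for item~2. Write $c$ for the vertex of $K_1$ and $\{a_i,b_i\}$ for the copies of $K_2$.
\begin{enumerate}
\item Upper bound: take $R=\{c\}$ and $B=\{a_1,\dots,a_n\}$. Each $b_i$ is adjacent to $c\in R$ and to $a_i\in B$, which gives $n+1$.
\item Lower bound, when $c\in S$: a pair with $a_i,b_i\notin S$ leaves $a_i$ with only the neighbour $c$ in $S$. So every pair meets $S$, and $|S|\ge n+1$.
\item Lower bound, when $c\notin S$: a pair with only one vertex in $S$ leaves the other vertex with a single neighbour in $S$. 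A pair with no vertex in $S$ is worse. Hence every pair lies entirely in $S$, giving $|S|\ge 2n\ge n+1$.
\end{enumerate}

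\textbf{Item 2, scope.} Item~2 is the main obstacle, because the ``only if'' direction is false if $G$ is allowed to be arbitrary. Take $G=H=2K_2$. Then $m=n=4$, $\Delta=1=m-3$, and $\gamma_{ds}(2K_2)=4$, since each $K_2$ component must lie entirely in $S$ by the lemma. The preceding theorem then gives $\gamma_{ds}(G+H)=4=w$ with $G\not\cong K_1$. So I would prove item~2 in the form ``for $G\cong K_1$, $\gamma_{ds}(K_1+H)=w$ iff $H\cong \overline{K_n}$, $K_2$, or $K_2\cup\overline{K_{n-2}}$'', and note that the general converse needs an additional hypothesis. Here $K_1+K_2=K_3$ has value $2=w$. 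In the notation $K_2\cup\overline{K_{n-2}}$ the order of $H$ is $n=w$.

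\textbf{Item 2, necessity for $G\cong K_1$.} Let $c$ be the cone vertex. For any dominating set $D$ of $H$, the set $R=\{c\}$, $B=D$ is DS-dominating in $K_1+H$: every vertex of $H\setminus D$ is adjacent to $c$ and to some vertex of $D$. Hence $\gamma_{ds}(K_1+H)\le 1+\gamma(H)$. If the value equals $w$, then $\gamma(H)\ge w-1$, which forces $H$ to have at most one edge. This leaves exactly $\overline{K_w}$ and $K_2\cup\overline{K_{w-2}}$, the latter including $K_2$ when $w=2$.

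\textbf{Item 2, sufficiency.}
\begin{enumerate}
\item For $H=\overline{K_w}$ with $w\ge2$, $K_1+H$ is the star $K_{1,w}$. All $w$ leaves are forced into $S$ by the lemma, and splitting them between $R$ and $B$ dominates $c$. So the value is $w$.
\item For $H=K_2\cup\overline{K_{w-2}}$ with $K_2=\{a,b\}$, the $w-2$ leaves are forced into $S$.
\begin{enumerate}
\item If $c\notin S$, the lemma applied to $a$ and $b$ forces both into $S$.
\item If $c\in S$, then $a$ and $b$ have neighbourhoods $\{c,b\}$ and $\{c,a\}$, so at least one of them is in $S$.
\item In both cases two further vertices are needed, so $|S|\ge w$.
\item The set of leaves together with $\{a,b\}$, split suitably between $R$ and $B$, attains $w$.
\end{enumerate}
\end{enumerate}
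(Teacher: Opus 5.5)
The paper states this observation without proof, so there is no authors' argument to compare with. Judged on its own, your proposal is correct.

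\textbf{Item 2.} Your reading of item~2 is the right one: its ``only if'' direction is false as written. Your witness $2K_2+2K_2$ is valid; a direct check confirms there is no DS-dominating set of size $2$ or $3$ there. A more economical witness, which does not rely on the paper's $\gamma_{ds}=4$ theorem, is $H=\overline{K_2}$ with $G$ arbitrary. Take $S=V(H)$ with one vertex in each class; this is DS-dominating because every vertex of $G$ is adjacent to both. Hence $\gamma_{ds}(G+\overline{K_2})=2=w$ for every $G$, and likewise $K_2+K_2=K_4$ has value $2=w$.

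You should also record that the ``if'' direction fails for $H=\overline{K_1}$: by item~1, $K_1+K_1=K_2$ has value $2\neq 1=w$. Your sufficiency argument tacitly assumes $w\ge 2$, but the corrected statement you announce should say so explicitly, e.g.\ by requiring $H\not\cong K_1$.

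\textbf{Relation to the corona theorem.} For $G\cong K_1$ you are in effect re-deriving the case $m=1$ of the paper's corona theorem, since $K_1+H=K_1\circ H$. That theorem gives $\gamma_{ds}(K_1+H)=\min\{\gamma_{ds}(H),1+\gamma(H)\}$ for $H\not\cong K_1$. Your bound $\gamma_{ds}(K_1+H)\le 1+\gamma(H)$, together with your split on whether the cone vertex lies in $S$, is exactly that theorem's argument with a single copy of $H$. Citing it would shorten both items:
\begin{itemize}
\item Item~3 becomes a one-liner: $\min\{2n,n+1\}=n+1$.
\item Item~2 reduces to the pair of conditions $\gamma_{ds}(H)=w$ and $\gamma(H)\ge w-1$. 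For $w\ge 2$ these hold exactly when $H$ has at most one edge.
\end{itemize}
Your direct counting, built on the lemma that a vertex with at most one neighbour in $S$ must lie in $S$, is self-contained and does not depend on the correctness of that theorem. The cost is some extra length.
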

\subsection{Cartesian Product of Graphs}
\begin{definition}[\textbf{Cartesian Product of Graphs} {\cite{West}}]
The Cartesian product of two graphs $G_1(V_1,E_1)$ and $G_2(V_2,E_2)$, denoted by
$G_1 \times G_2$ is the graph with vertex set $V_1 \times V_2$.
If two vertices $(u_1,u_2)$ and $(v_1,v_2)$ are adjacent in $G_1 \times G_2$
whenever either $u_1 = v_1$ and $u_2$ is adjacent to $v_2$ in $G_2$, or
$u_2 = v_2$ and $u_1$ is adjacent to $v_1$ in $G_1$.
\end{definition}

\begin{theorem}
Let $G$ and $H$ be graphs of orders $m\ge1$ and $n\ge1$, respectively. Then the dual-server domination number of the cartesian product  $G\times H $ is 
$\gamma_{ds}(G\times H)=mn$
if and only if $
\Delta(G)+\Delta(H)\le1$.
\end{theorem}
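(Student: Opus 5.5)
The plan is to prove both directions separately, using one basic fact. For any graph $F$, $\gamma_{ds}(F)=|V(F)|$ holds exactly when no vertex can be left outside a DS-dominating set. More precisely, if some vertex $w$ has two neighbours $a,b$, then $V(F)\setminus\{w\}$ with $R\ni a$ and $B\ni b$ (everything else distributed arbitrarily, keeping both classes nonempty) is a DS-dominating set of size $|V(F)|-1$. Conversely, if every vertex has degree at most $1$, then any vertex outside $S$ has at most one neighbour, so it cannot have neighbours in both $R$ and $B$. This forces $S=V(F)$ and hence $\gamma_{ds}(F)=|V(F)|$. So we will first record the equivalence $\gamma_{ds}(F)=|V(F)|\iff \Delta(F)\le 1$, at least whenever $|V(F)|\ge 3$, so that removing $w$ still leaves room for two nonempty classes. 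The small cases, where $|V(F)|\le 2$, are handled directly.

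Next we use the standard degree formula for the Cartesian product: $\deg_{G\times H}(u,v)=\deg_G(u)+\deg_H(v)$. Hence $\Delta(G\times H)=\Delta(G)+\Delta(H)$, obtained by taking $u$ and $v$ of maximum degree simultaneously. Combining this with the fact above gives $\gamma_{ds}(G\times H)=mn$ if and only if $\Delta(G)+\Delta(H)\le 1$.

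For the sufficiency direction, assume $\Delta(G)+\Delta(H)\le 1$. Then $G\times H$ has maximum degree at most $1$, i.e.\ it is a disjoint union of copies of $K_2$ and isolated vertices (for example $K_2\times\overline{K_n}=nK_2$). Every vertex outside a candidate $S$ would need two distinct neighbours, which is impossible, so $S=V(G\times H)$ and $\gamma_{ds}=mn$. For the necessity direction, suppose $\Delta(G)+\Delta(H)\ge 2$. Then there is a vertex $(u,v)$ with two neighbours $(a_1,b_1)$ and $(a_2,b_2)$. Take $S=V(G\times H)\setminus\{(u,v)\}$, put $(a_1,b_1)$ in $R$ and $(a_2,b_2)$ in $B$, and place the remaining vertices arbitrarily. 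This is a valid DS-dominating set, since only $(u,v)$ lies outside $S$. Therefore $\gamma_{ds}(G\times H)\le mn-1<mn$.

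The main obstacle is the boundary convention in tiny cases, especially $mn=1$: a single vertex cannot be partitioned into two nonempty classes. If the paper's definition requires $R,B\neq\emptyset$, then $\gamma_{ds}(K_1)$ needs care. I would handle this by following the convention of \cite{DS}, under which a graph with $\Delta\le1$ has $\gamma_{ds}$ equal to its order, allowing an empty class when no vertex lies outside $S$. I would state this convention explicitly before the main argument. Apart from this, the argument is routine once the degree-sum identity is in place.
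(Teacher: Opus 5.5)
Your proof is correct and follows essentially the same route as the paper: a vertex with two neighbours can be left outside $S$ with those neighbours split between $R$ and $B$, while maximum degree at most $1$ forces $S=V(G\times H)$. The paper establishes the existence of such a vertex by cases ($\Delta(G)\ge 2$, $\Delta(H)\ge 2$, or the $4$-cycle when both equal $1$), whereas your identity $\deg_{G\times H}(u,v)=\deg_G(u)+\deg_H(v)$ covers all of them at once. Your $|V(F)|\ge 3$ caveat holds automatically, since a vertex with two neighbours already forces three vertices. So the only real convention issue is the $mn=1$ case you flagged, and the paper also settles it by taking $\gamma_{ds}(K_1\times K_1)=1$.
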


\begin{proof}
Let $
V(G)=\{u_1,u_2,\ldots,u_m\} \quad \text{ and }\quad
V(H)=\{v_1,v_2,\ldots,v_n\}$.
Then $
|V(G\times H)|=mn$.

Assume that $\gamma_{ds}(G\times H)=mn$. Let $S=R\cup B $
be a minimum dual-server dominating set of $G\times H$. Then $|S|=mn$. Now, suppose that  $\Delta(G)+\Delta(H)\ge2$.
We consider the following cases.

\textbf{Case 1.} $\Delta(G)\ge2$.\\
In particular, without loss of generality, let  $u_q$ and $u_r$ be two vertices adjacent to $u_k$. Choose any vertex $v_\ell\in V(H)$.

By the definition of the Cartesian product,
\[
(u_q,v_\ell)\sim(u_k,v_\ell)
\quad\text{and}\quad
(u_r,v_\ell)\sim(u_k,v_\ell).
\]
Hence the vertex $(u_k,v_\ell)$ has two distinct neighbours in
$G\times H$.
Without loss of generality, take
$
(u_q,v_\ell)\in R
\quad\text{and}\quad
(u_r,v_\ell)\in B.
$ Then $S \setminus (u_k,v_\ell) $  is not a DS-dominating set
, which implies
\[
|S| \neq mn,
\]
contradicting the assumption that $
\gamma_{ds}(G\times H)=mn $.

Therefore,
\[
\Delta(G)\le1.
\]

\textbf{Case 2.} $\Delta(H)\ge2$.\\
By an argument similar to that in Case~1, we obtain
\[
|S|\neq mn,
\]
contradicting the assumption that $
\gamma_{ds}(G\times H)=mn $.

Hence,
\[
\Delta(H)\le1.
\]

\textbf{Case 3.} $\Delta(G)=1$ and $\Delta(H)=1$.\\
Since $\Delta(G)=1$ and $\Delta(H)=1$, every vertex of $G$ and $H$ has at
most one neighbour.
Let $u_k$ be adjacent to $u_q$ in $G$, and let $v_\ell$ be adjacent to
$v_r$ in $H$. Then the four vertices $
(u_k,v_\ell),\;
(u_k,v_r),\;
(u_q,v_\ell),\;
(u_q,v_r)$
form a cycle of length $4$ in $G\times H$.\\
In particular, $(u_k,v_\ell)$ and $(u_q,v_r)$  adjacent to both
$(u_q,v_\ell)$ and $(u_k,v_r)$. Therefore, the vertex $(u_k,v_\ell)$  and $(u_q,v_r) $ have
two distinct neighbours in $G\times H$.\\
without loss of generality, take
\[
(u_q,v_\ell)\in R
\quad\text{and}\quad
(u_k,v_r)\in B.
\]
Hence, $
S\setminus\{(u_k,v_\ell),(u_q,v_r)\}
$ is not a dual-server dominating set. Hence 
\[
|S| \neq mn,
\]
which contradicts the assumption that
\[
\gamma_{ds}(G\times H)=mn.
\]

Thus the case $\Delta(G)=1$ and $\Delta(H)=1$ is impossible.

Therefore,
\[
\Delta(G)+\Delta(H)\le1.
\]

Conversely, suppose that 
$\Delta(G)+\Delta(H)\le1$.
Then one of the following cases must occur.

\textbf{Case 4.}  Suppose $\Delta(G)=0$ and $\Delta(H)=0$.\\
Then $V(G\times H)=\overline{K_{mn}} $. Therefore, \[
\gamma_{ds}(G\times H)=|V(G\times H)|=mn.
\]

\textbf{Case 5.} $\Delta(G)=1$ and $\Delta(H)=0$.\\
Since $\Delta(G)=1$, every vertex of $G$ has at most one neighbour.
Consequently, every vertex of $G\times H$ also has at most one neighbour.

Let $S=R\cup B$
be a DS-dominating set of $G\times H$. Every vertex outside $S$ must be adjacent to at
least one vertex in $R$ and $B$.

However, every vertex of $G\times H$ has degree at most one. Hence no
vertex can have two distinct neighbours. Therefore, no vertex can lie
outside $S$, and so $
S=V(G\times H)$.

Hence,
\[
\gamma_{ds}(G\times H)=|V(G\times H)|=mn.
\]

\textbf{Case 3.} $\Delta(G)=0$ and $\Delta(H)=1$.

By an argument similar to that in Case~2, we obtain
\[
\gamma_{ds}(G\times H)=mn.
\]
\end{proof}
\begin{observation} 
\leavevmode
\begin{enumerate}
\item{Let $G$ and $H$ be two graphs, each of order 1. Then $\gamma_{ds}(G\times H)= 1$.}
\item{Let  $G \cong K_1$, and $H \cong {K_n}$, where $n \geq 2$. Then $\gamma_{ds}(G\times H)= 2$}
\end{enumerate}
\end{observation}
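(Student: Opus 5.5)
Both parts rest on the isomorphism $K_1\times H\cong H$. For any graph $H$, the map $(u_1,v)\mapsto v$ is an isomorphism from $K_1\times H$ onto $H$. This is because two vertices $(u_1,v)$ and $(u_1,w)$ share their first coordinate, so by the definition of the Cartesian product they are adjacent exactly when $vw\in E(H)$; the other adjacency clause never applies, since $K_1$ has no edges. The plan is therefore to reduce each part to computing $\gamma_{ds}$ of a very small or complete graph, and to handle the lower bounds directly from the definition.

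For part (1), $G\times H\cong K_1$ has a single vertex. The set $S=V(G\times H)$, with $R$ consisting of that vertex and $B=\varnothing$ (or the reverse), leaves no vertex outside $S$. The domination condition is then vacuous, so $\gamma_{ds}\le 1$. Any DS-dominating set must be nonempty, since otherwise the lone vertex would lie outside $S$ with no neighbour in $R$. Hence $\gamma_{ds}(G\times H)=1$. This is the same convention the paper uses for $\overline{K_{mn}}$ in Case 4 of the preceding theorem.

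For part (2), $G\times H\cong K_n$ with $n\ge 2$. For the upper bound, I take two distinct vertices $x,y$ and set $R=\{x\}$, $B=\{y\}$. Every vertex outside $S$ is adjacent to both $x$ and $y$ by completeness, so $\gamma_{ds}\le 2$. This agrees with $\gamma_{ds}(K_n)=2$ from \cite{DS}, which could simply be cited instead.

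For the lower bound, suppose $|S|\le 1$. Since $n\ge 2$, some vertex $w$ lies outside $S$. That vertex needs a neighbour in $R$ and a different neighbour in $B$, and these are distinct because $R\cap B=\varnothing$, so $|S|\ge 2$, a contradiction. The only step needing care is this lower bound, specifically making sure a vertex outside $S$ genuinely exists; that is exactly where the hypothesis $n\ge 2$ is used. Everything else is immediate from the isomorphism.
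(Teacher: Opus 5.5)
Your argument is correct. The paper states this observation without proof, and the evident intended justification is exactly yours: $K_1\times H\cong H$, combined with $\gamma_{ds}(K_1)=1$ and $\gamma_{ds}(K_n)=2$ (the latter the paper quotes from Chellali et al.). You also make explicit what the paper leaves implicit, namely the empty-class convention in part (1) and the requirement of distinct $R$- and $B$-neighbours for the lower bound in part (2).
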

\subsection{Corona of Graphs}
\begin{definition}[\textbf{Corona of Graphs} {\cite{Corona}}]
The corona of two graphs $G_1$ and $G_2$ is the graph $G = G_1 \circ G_2$
formed from one copy of $G_1$ and $|V(G_1)|$ copies of $G_2$, where the
$i$-th vertex of $G_1$ is adjacent to every vertex in the $i$-th copy of $G_2$.
\end{definition}
\begin{theorem}
Let $G$ and $H$ be any two graphs with $|V(G)|=m$. Then the dual-server domination number of the corona graph $G\circ H$ is
\[
\gamma_{ds}(G\circ H)=
\begin{cases}
m \cdot \min\{\gamma_{ds}(H),\,1+\gamma(H)\}, & \text{if } H\not\cong K_{1},\\[2mm]
m+\gamma(G), & \text{if } H\cong K_{1}.
\end{cases}
\]
\end{theorem}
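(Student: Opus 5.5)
The proof rests on a local decomposition of $G\circ H$. Write $V(G)=\{u_1,\dots,u_m\}$ and let $H_i$ be the copy of $H$ attached to $u_i$. Every vertex of $H_i$ has all its neighbours inside $V(H_i)\cup\{u_i\}$. So for any DS-dominating set $S=R\cup B$, the condition on the vertices of $H_i\setminus S$ involves only $S_i:=S\cap(V(H_i)\cup\{u_i\})$. The plan is to show $|S_i|\ge c(H)$ for every $i$, where $c(H)$ is the per-copy cost. Since the sets $V(H_i)\cup\{u_i\}$ are pairwise disjoint, summing over $i$ gives the lower bound. A matching construction then gives the upper bound. The two cases $H\not\cong K_1$ and $H\cong K_1$ need different handling.

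\textbf{Case $H\not\cong K_1$, lower bound.} Fix $i$ and split on whether $u_i\in S$.
\begin{itemize}
\item If $u_i\notin S$, every vertex of $H_i\setminus S$ needs a neighbour in $R$ and one in $B$, and these must lie in $H_i$. Hence $S\cap V(H_i)$, with the induced partition, is a DS-dominating set of $H_i$, so $|S_i|\ge\gamma_{ds}(H)$.
\item If $u_i\in S$, say $u_i\in R$, every vertex of $H_i\setminus S$ needs a neighbour in $B\cap V(H_i)$. Thus $S\cap V(H_i)$ dominates $H_i$, so $|S_i|\ge 1+\gamma(H)$.
\end{itemize}
In either case $|S_i|\ge\min\{\gamma_{ds}(H),1+\gamma(H)\}$, and summing gives $\gamma_{ds}(G\circ H)\ge m\cdot\min\{\gamma_{ds}(H),1+\gamma(H)\}$.

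\textbf{Case $H\not\cong K_1$, upper bound.} The same choice is made in every copy.
\begin{itemize}
\item If $1+\gamma(H)$ is the smaller value, put every $u_i$ in $R$ and a minimum dominating set $D_i$ of each $H_i$ in $B$. Each vertex of $H_i\setminus D_i$ sees $u_i\in R$ and some vertex of $D_i\subseteq B$, and no vertex of $G$ lies outside $S$.
\item If $\gamma_{ds}(H)$ is the smaller value, take a minimum DS-dominating set of each $H_i$ and leave all $u_i$ outside $S$. Vertices of $H_i\setminus S$ are handled inside $H_i$. For $u_i$, note that $u_i$ is adjacent to all of $H_i$, so it suffices that both colour classes in $H_i$ are nonempty.
\end{itemize}

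The main obstacle is this last point about nonempty colour classes. If $H$ has an edge, a minimum DS-dominating set with $V(H)\setminus S\ne\emptyset$ forces both classes to be nonempty. If $S=V(H)$, then $|V(H)|\ge2$ because $H\not\cong K_1$, and we may simply choose the partition with both classes nonempty. I would state this explicitly as a remark: for $|V(H)|\ge2$, some minimum DS-dominating set of $H$ has $R\neq\emptyset\neq B$.

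\textbf{Case $H\cong K_1$.} Here $G\circ H$ is $G$ with one pendant leaf $\ell_i$ attached to each $u_i$.
\begin{itemize}
\item \emph{Lower bound.} A vertex of degree $1$ can never have neighbours in both classes, so every $\ell_i$ lies in $S$. Let $D=S\cap V(G)$. A vertex $u_i\notin D$ needs a second neighbour in $S$ besides $\ell_i$, and that neighbour must lie in $D$. Hence $D$ dominates $G$, giving $|S|\ge m+\gamma(G)$.
\item \emph{Upper bound.} Take a minimum dominating set $D$ of $G$, and set $R=D$ and $B=\{\ell_1,\dots,\ell_m\}$. Each $u_i\notin D$ then sees $\ell_i\in B$ and a vertex of $D\subseteq R$.
\end{itemize}
This gives $\gamma_{ds}(G\circ H)=m+\gamma(G)$ when $H\cong K_1$.
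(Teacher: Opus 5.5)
Your proposal is correct and follows essentially the same route as the paper. The lower bound splits each copy on whether $u_i\in S$, the upper bound uses the same two uniform constructions (a minimum DS-dominating set in every copy, or all of $V(G)$ in one class plus a minimum dominating set of each copy in the other), and the case $H\cong K_1$ is handled through the pendant leaves and a dominating set of $G$. You also supply two points the paper only asserts: that when $H\not\cong K_1$ some minimum DS-dominating set of $H$ has both colour classes nonempty, and the lower bound $m+\gamma(G)$ in the case $H\cong K_1$.
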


\begin{proof}
Suppose $H\not\cong K_{1}$. Let $H^{v}$ denote the copy of $H$ corresponding to each vertex $v\in V(G)$. We first prove the upper bound by considering two cases.

\textbf{Case 1.} Suppose $
\gamma_{ds}(H)\le 1+\gamma(H)$.\\
For each $v\in V(G)$, let $S^{v}=R^{v}\cup B^{v}$ be a minimum DS-dominating set of $H^{v}$. Define $
S=R\cup B=\bigcup_{v\in V(G)}S^{v}$,
where $
R=\bigcup_{v\in V(G)}R^{v}$ and 
$B=\bigcup_{v\in V(G)}B^{v}$.\\
Since each copy contributes $\gamma_{ds}(H)$ vertices, $
|S|=m \cdot \gamma_{ds}(H)$.

Let $x\notin S$. Since $S^{v}$ is a DS-dominating set of $H^{v}$, the vertex $x$ has at least one neighbour in $R^{v}$ and at least one neighbour in $B^{v}$ for some $v \in V(G)$. Hence $x$ has at least one neighbour in both $R$ and $B$.

Let $x=v\notin S$. Since $v$ is adjacent to every vertex of $H^{v}$, and  $S^{v}$ contains vertices from both $R^{v}$ and $B^{v}$, the vertex $v$ has at least one neighbour in both $R$ and $B$.

Therefore, every vertex outside $S$ has at least one neighbour in both $R$ and $B$. Hence $S$ is a dual-server dominating set of $G\circ H$. Consequently,
\[
\gamma_{ds}(G\circ H)\le m \cdot \gamma_{ds}(H).
\]

\textbf{Case 2.} Suppose
$
\gamma_{ds}(H)\ge 1+\gamma(H)$.\\
For each $v\in V(G)$, let $D^{v}$ be a minimum dominating set of $H^{v}$. Let $S=R\cup B$,
where $
R=\bigcup_{v\in V(G)}\{v\}$ and 
$B=\bigcup_{v\in V(G)}D^{v}$.\\
Since each copy contributes one vertex from $G$ and $\gamma(H)$ vertices from $H^v$, $
|S|=m \cdot (1+\gamma(H))$.

Let $x\notin S$. Since $D^{v}$ is a dominating set of $H^{v}$, the vertex $x$ has a neighbour in $B$. Moreover, $x$ is adjacent to the vertex $v\in R$. Thus $x$ has at least one neighbour in both $R$ and $B$.

Hence every vertex outside $S$ has at least one neighbour in both $R$ and $B$. Therefore $S$ is a DS-dominating set of $G\circ H$, and
\[
\gamma_{ds}(G\circ H)\le m \cdot (1+\gamma(H)).
\]
Combining both cases, we obtain
\[
\gamma_{ds}(G\circ H) \leq
m \cdot \min\{\gamma_{ds}(H),\,1+\gamma(H)\}.
\]

Now we prove the lower bound. Let $S=R \cup B$ be a minimum DS-dominating set of $G\circ H$. We consider the following cases.

\textbf{Case 3.} $v\notin S$.

  Since $v\notin S$, no vertex of $H^v\setminus S$ can rely on $v$ to satisfy dual-server domination condition. Thus, both required neighbours of every vertex in $H^{v}\setminus S$ must lie in $H^v$. Therefore, $
S\cap V(H^{v})$
is a DS-dominating set of $H^{v}$. Since $H^{v}$ is a copy of $H$,
\[
|S\cap V(H^{v})|\ge\gamma_{ds}(H).
\]

\textbf{Case 4.}  $v\in S$.\\
Without loss of generality, assume that $v\in R$. Let $x\in V(H^v)$ and  $x\notin S$. Since $x$ is adjacent to $v$, it already has a neighbour in $R$. To satisfy the DS- domination condition, $x$ must also have a neighbour in $B$. Hence $
S\cap V(H^{v})$
contains a dominating set of $H^{v}$. Therefore,
\[
|S\cap V(H^{v})|\ge\gamma(H).
\]
Since $v\in S$, the subgraph $v+H^{v}$ contributes at least $
1+\gamma(H)$
vertices to $S$.

Hence, in either case, each subgraph $v+H^{v}$ contributes at least
\[
\min\{\gamma_{ds}(H),\,1+\gamma(H)\}
\]
vertices to $S$.

Since there are $m$ such subgraphs corresponding to the $m$ vertices of $G$, we obtain
\[
|S|\ge
m \cdot \min\{\gamma_{ds}(H),\,1+\gamma(H)\}.
\]

Therefore, $\gamma_{ds}(G\circ H)\ge
m \cdot \min\{\gamma_{ds}(H),\,1+\gamma(H)\}.$\\
Combining this with the upper bound, we obtain \[\gamma_{ds}(G\circ H)= 
m \cdot \min\{\gamma_{ds}(H),\,1+\gamma(H)\}.\]

Now suppose $H\cong K_{1}$.\\ For each $v\in V(G)$, let $u^{v}$ denote the unique  vertex  of the copy $H^v$.
Let $D$ be a minimum dominating set of $G$. Let
$ S=R\cup B$,
where $
R=\{u^{v}:v\in V(G)\}$ and $
B=D$.
Clearly, $
|S|=m+\gamma(G)$.

Then it directly follows from the definition that $S$ is a minimum dual-server dominating set. Therefore, 

\[
\gamma_{ds}(G\circ H)\ = m+\gamma(G).
\]
\end{proof}

\section*{Concluding Remarks}

In this paper, we investigated dual-server domination in several graph operations. We established exact values of the dual-server domination number for the join, Cartesian product, and corona of graphs, and determined the dual-server domination number of the splitting graphs of paths, cycles, complete graphs and complete bipartite graphs. These results contribute to the understanding of dual-server domination for graph operations.

The results obtained in this paper naturally lead to several directions for further investigation. It would be worthwhile to determine the dual-server domination number for other graph operations. Another natural direction is to investigate dual-server domination in the splitting graphs of other graph families.

\end{document}